\theoremstyle{plain}
\newtheorem{theorem}{Theorem}
\newtheorem*{theorem-non}{Theorem}
\newtheorem{lemma}[theorem]{Lemma}
\theoremstyle{definition}
\newcommand{\Addresses}{{% additional braces for segregating \footnotesize
  \bigskip
  \footnotesize

  Andr\'as M\'esz\'aros, 

\textsc{Central European University, Budapest \&}\par\nopagebreak
    
\textsc{Alfr\'ed R\'enyi Institute of Mathematics, Budapest}\par\nopagebreak
  \textit{E-mail address:}: \texttt{Meszaros\_Andras@phd.ceu.edu}

}}
\title{On the free energy density of  factor models \\ on biregular graphs}
\author{Andr\'as M\'esz\'aros}
\begin{document}

\maketitle

\begin{abstract}
Let $h(0),h(1),\dots,h(k)$ be a symmetric concave sequence. For a $(d,k)$-biregular factor graph $G$ and  $x\in \{0,1\}^V$, we define the Hamiltonian
\[H_G(x)=\sum_{f\in F} h\left(\sum_{v\in \partial f} x_v\right),\]    
where $V$ is the set of variable nodes, $F$ is the set of factor nodes. We prove that if $(G_n)$ is a large girth sequence of  $(d,k)$-biregular factor graphs, then the  free energy density  of $G_n$ converges. The limiting free energy density is given by the Bethe-approximation. 
\end{abstract}

\section{Introduction}

Let $G$ be a bipartite graph with color classes $V$ and $F$. Nodes in $V$ are called variable nodes,  nodes in $F$ are called factor nodes. Fix two integers $d,k\ge 2$. Assume that all the variable nodes have degree $d$, and all the  factor nodes have degree $k$.  Graphs satisfying the conditions above will be called $(d,k)$-biregular factor graphs. 
 
Let $h_0,h_1,\dots,h_k$ be a symmetric concave sequence, that is,
\begin{align*}
h_i&=h_{k-i} &\forall 0\le i\le k,\\
2h_i&\ge h_{i-1}+h_{i+1} &\forall 1\le i\le k-1.
\end{align*}
We use the notations $h_i$ and $h(i)$ interchangeably.

We define the Hamiltonian $H_G:\{0,1\}^V\to\mathbb{R}$ as follows. For $x\in \{0,1\}^V$, we set
\[H_G(x)=\sum_{f\in F} h\left(\sum_{v\in \partial f} x_v\right).\] 
Here $\partial f$ denotes the set of neighbors of $f$. 

Given a non-negative real parameter $\beta$ called the \emph{inverse temperature}, we define the \emph{partition function} $Z_G(\beta)$ by
\[Z_G(\beta)=\sum_{x\in \{0,1\}^V} \exp\left(-\beta H_G(x)\right).\]

We also define  the \emph{free energy density} $\Phi_G(\beta)$ as
\[\Phi_G(\beta)=\frac{\log Z_G(\beta)}{|V|}.\]

A sequence of finite graphs has large girth if the length of the shortest cycle tends to infinity. 
\begin{theorem}\label{TheoremFO}
Let $G_1,G_2,\dots$ be a large girth sequence of $(d,k)$-biregular factor graphs. Then
\[\lim_{n\to\infty}\Phi_{G_n}(\beta)=\Phi(\beta),\]
where $\Phi(\beta)$ is specified below.
\end{theorem}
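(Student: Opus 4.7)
The plan is to sandwich $\lim_n \Phi_{G_n}(\beta)$ between matching upper and lower bounds, both equal to the Bethe free energy $\Phi(\beta)$ obtained by evaluating belief propagation (BP) at its symmetric fixed point on the $(d,k)$-biregular tree $T_{d,k}$. With factor weights $w_i := e^{-\beta h_i}$, symmetric BP on $T_{d,k}$ reduces to a one-dimensional recursion for the marginal $\mu = \Pr(x_v=1)$; the symmetry $h_i = h_{k-i}$ forces $\mu=\tfrac{1}{2}$ to be a fixed point, while concavity of $h$ is used to single out the relevant fixed point and makes the resulting Bethe formula
\[\Phi(\beta) \;=\; (\text{variable term}) \;+\; \tfrac{d}{k}\,(\text{factor term}) \;-\; d\,(\text{edge term})\]
well-defined.

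For the universal upper bound $\Phi_G(\beta) \le \Phi(\beta)$, valid for every $(d,k)$-biregular factor graph $G$, I would follow a two-lift monotonicity argument in the spirit of Ab\'ert--Csikv\'ari. Given a signing $\sigma : E(G) \to \{\pm 1\}$, the associated 2-lift $\tilde G_\sigma$ is again $(d,k)$-biregular on $2|V|$ variable nodes. Expanding $Z_{\tilde G_\sigma}(\beta)$ in the sign variables and averaging, the symmetric concavity of $h$ (equivalently, the symmetric log-behavior of the sequence $(w_i)$) should yield
\[\mathbb{E}_\sigma Z_{\tilde G_\sigma}(\beta) \;\ge\; Z_G(\beta)^2,\]
and hence $\mathbb{E}_\sigma \Phi_{\tilde G_\sigma}(\beta) \ge \Phi_G(\beta)$. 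Iterating random 2-lifts produces a sequence of $(d,k)$-biregular factor graphs with nondecreasing expected free-energy density, whose local structure converges in the Benjamini--Schramm sense to $T_{d,k}$; passing to the limit gives $\Phi_G(\beta) \le \Phi(\beta)$, and in particular $\limsup_n \Phi_{G_n}(\beta) \le \Phi(\beta)$.

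For the matching lower bound $\liminf_n \Phi_{G_n}(\beta) \ge \Phi(\beta)$, I would exploit the large girth assumption directly. Whenever $2r+1 \le \mathrm{girth}(G_n)$, the depth-$r$ neighborhood of every variable node in $G_n$ is isomorphic to the corresponding ball in $T_{d,k}$. The Bethe--Gibbs measure built from BP on $T_{d,k}$ truncated at depth $r$ can therefore be locally pasted into a test measure $\nu_r$ on $\{0,1\}^{V(G_n)}$, to be used in the Gibbs variational principle
\[\Phi_{G_n}(\beta) \;=\; \sup_{\nu}\left[\,\tfrac{1}{|V|}\,H(\nu) \;-\; \tfrac{\beta}{|V|}\,\mathbb{E}_\nu H_{G_n}\,\right].\]
A bookkeeping estimate, using symmetric concavity to control cross-terms at the tree boundary, shows that the value attained by $\nu_r$ differs from $\Phi(\beta)$ by an error tending to $0$ as $r \to \infty$ with $n$ large enough.

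The heart of the proof, and its main obstacle, is the 2-lift inequality $\mathbb{E}_\sigma Z_{\tilde G_\sigma}(\beta) \ge Z_G(\beta)^2$ in this general symmetric concave setting. It reduces to a polynomial/sequence inequality for $(w_i)$ of Newton- or real-rootedness-type, whose verification requires the full strength of the symmetric concavity hypothesis on $h$; this is where prior techniques for special cases (monomer--dimer, independent sets) would have to be genuinely extended. Secondary difficulties that I expect to handle more routinely are establishing uniqueness/well-posedness of the symmetric BP fixed point used to define $\Phi(\beta)$, and quantifying the boundary contribution in the truncated-tree measure for the lower bound; both are controlled by the symmetry and concavity of $h$, but the bookkeeping must be done with some care.
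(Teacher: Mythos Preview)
Your proposal has the two bounds reversed, and this is not just a relabeling issue: the direction of the 2-lift inequality you claim is the \emph{opposite} of the truth for this class of models. Because $h$ is concave, the factor weights $\psi_a(x_{\partial a})=\exp(-\beta h(\sum x_i))$ are log-supermodular (this is stated explicitly in the paper). For log-supermodular graphical models, Ruozzi's theorem (built on Vontobel's cover characterization) gives $Z(\tilde G)\le Z(G)^2$ for \emph{every} 2-cover $\tilde G$, hence $\Phi_{\tilde G}\le \Phi_G$. Iterated 2-lifts therefore \emph{decrease} the free energy density toward the Bethe value, yielding the universal \emph{lower} bound $\Phi_G(\beta)\ge\Phi(\beta)$ for every $(d,k)$-biregular $G$ --- exactly the bound the paper obtains by citing Ruozzi. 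Your claimed universal upper bound $\Phi_G(\beta)\le\Phi(\beta)$ is false in general (take any finite $G$ with short cycles at $\beta>\beta_c$), so the 2-lift route cannot produce the upper bound.

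Consequently, your scheme is missing the genuine content of the theorem: the upper bound $\limsup_n \Phi_{G_n}(\beta)\le\Phi(\beta)$, which is \emph{not} universal and really uses large girth. The paper obtains it by an interpolation argument: write $\Phi_{G_n}'(\beta)=\tfrac{d}{k}\,\mathbb{E}[-h(\sum_{i\in\partial a}X_i)]$, pass to a subsequential local weak limit to get a Gibbs measure $\mu$ on $\mathbb{T}$, and then prove (Lemma~\ref{lemma4}) that among all Gibbs measures on $\mathbb{T}$ the local expected energy at a factor node is minimized by the extremal measure with all-$1$ (or all-$0$) boundary, i.e.\ by the BP fixed point $t=\mathfrak{g}(\beta)$. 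Integrating in $\beta$ against the derivative of the Bethe functional closes the bound. Note also that the relevant BP fixed point is $\mathfrak{g}(\beta)$, not the symmetric one $t=0$; for $\beta>\beta_c$ these differ, so the phrase ``concavity singles out the relevant fixed point'' hides the actual structure (Lemma~\ref{lemmafix}) that has to be established. Your variational/truncated-tree idea for the lower bound is in spirit fine but is superseded by the pointwise Ruozzi inequality; the real work you are missing is the energy-minimization Lemma~\ref{lemma4} needed for the upper bound.
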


Using the same potentials as above, one can define the set of Gibbs measures $\mathcal{G}$ on the \break {$(d,k)$-biregular} infinite tree $\mathbb{T}$. Since $\mathbb{T}$ is a tree, all the extremal elements of $\mathcal{G}$ are Markov-chains. Thus, they can be described by an assignment of messages to each directed edge of $\mathbb{T}$ such that these messages form a BP-fixed point. We will be particularly interested in BP-fixed points that are symmetric in the sense that for all directed edges of $\mathbb{T}$ that are directed from variable nodes to factor nodes, the messages are all equal. Using the so called log-likelihood parametrization, we will see that these symmetric BP-fixed points correspond to the zeros of the function  

\begin{align*}
F(\beta,t)&=-t+(d-1)\log\left(\frac{\sum_{i=0}^{k-1} {{k-1}\choose{i}} \exp(-\beta h_{i+1}+it)}{\sum_{i=0}^{k-1} {{k-1}\choose{i}} \exp(-\beta h_{i}+it)}\right).
\end{align*}

See Subsection \ref{subsecT} for a more detailed explanation. The next lemma characterizes the zero set of $F$.

\begin{lemma}\label{lemmafix}
There is a $\beta_c\in (0,\infty]$, and a differentiable monotone increasing function \break $\mathfrak{g}:(\beta_c,\infty)\to \mathbb{R}_{>0}$ with $\lim_{\beta \to \beta_c} \mathfrak{g}(\beta)=0$, such that

\begin{align*}
\{(\beta,t)\in \mathbb{R}_{\ge 0}\times \mathbb{R}\quad |\quad F(\beta,t)=0\}=&\mathbb{R}_{\ge 0}\times \{0\}\\ 
&\cup\{(\beta,+\mathfrak{g}(\beta))\quad|\quad \beta\in(\beta_c,\infty)\}\\
&\cup\{(\beta,-\mathfrak{g}(\beta))\quad|\quad \beta\in(\beta_c,\infty)\}.
\end{align*}
\end{lemma}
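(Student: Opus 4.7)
The plan proceeds by exploiting the reflection symmetry $h_i = h_{k-i}$, isolating $\beta_c$ from the slope at the origin, and establishing a concavity statement that pins down the global shape; smoothness and monotonicity of $\mathfrak{g}$ then follow from standard implicit-function arguments. Setting $A(t) := \sum_{i=0}^{k-1} \binom{k-1}{i} e^{-\beta h_i + it}$ and letting $B(t)$ be the corresponding sum with $h_{i+1}$ in place of $h_i$, the substitution $j = k-1-i$ together with $h_i = h_{k-i}$ gives $B(t) = e^{(k-1)t} A(-t)$. Writing $\phi := \log A$,
\[ G(\beta, t) := (d-1) \log \frac{B(t)}{A(t)} = (d-1)\bigl[(k-1)t + \phi(-t) - \phi(t)\bigr], \]
which is visibly odd in $t$, so $F(\beta, t) = -t + G(\beta, t)$ is odd as well; in particular $t = 0$ is always a zero and the non-trivial zeros come in pairs $\pm t$. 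The leading $i = k-1$ terms give $G(\beta, t) \to (d-1)\beta(h_1 - h_0)$ as $t \to \infty$, so $F(\beta, t) \to -\infty$ and only finitely many positive zeros exist.

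I identify $\beta_c$ via the slope at zero. Using the symmetry identity $E_{\mu_0}[i] + E_{\nu_0}[i] = k-1$ (with $\mu_0(i) \propto \binom{k-1}{i} e^{-\beta h_i}$ and $\nu_0(i) \propto \binom{k-1}{i} e^{-\beta h_{i+1}}$), a direct computation gives
\[ \partial_t F(\beta, 0) = -1 + (d-1)\bigl[(k-1) - 2 E_{\mu_0}[i]\bigr], \]
equal to $-1$ at $\beta = 0$. Differentiating in $\beta$ produces $2(d-1)\operatorname{Cov}_{\mu_0}(i, h_i)$, which I would argue is positive by decomposing $h$ as a non-negative combination of the symmetric tent functions $g_j(i) = \min(i, k-i, j)$ and checking $\operatorname{Cov}_{\mu_0}(i, g_j) > 0$ for each $j$ by direct pairing. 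Hence $\partial_t F(\beta, 0)$ is strictly monotone in $\beta$, and $\beta_c \in (0, \infty]$ is defined as its unique zero (with $\beta_c = \infty$ in degenerate cases).

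The main obstacle is the global shape. The target statement is that $t \mapsto G(\beta, t)$ is concave on $[0, \infty)$. Differentiating twice gives $\partial_t^2 G(\beta, t) = (d-1)\bigl[\phi''(-t) - \phi''(t)\bigr]$, so concavity is equivalent to the variance inequality $\operatorname{Var}_{\mu_t}(i) \ge \operatorname{Var}_{\mu_{-t}}(i)$ for $t \ge 0$, where $\mu_t(i) \propto \binom{k-1}{i} e^{-\beta h_i + it}$. I would attempt this by expanding both variances as double sums over pairs $(i, j)$ and using $h_i = h_{k-i}$ to match terms so that the difference appears as a non-negative combination of $\sinh((i+j)t)$ weighted by positive coefficients. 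Granted concavity, the rest is automatic: $F(\beta, \cdot)$ is concave on $[0, \infty)$ with value $0$ at $0$ and limit $-\infty$, so $F$ has no positive zero when $\partial_t F(\beta, 0) \le 0$ (i.e., $\beta \le \beta_c$) and exactly one positive zero $\mathfrak{g}(\beta)$ when $\partial_t F(\beta, 0) > 0$ (i.e., $\beta > \beta_c$).

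Strict concavity of $G$ on $[0,\infty)$ forces $\partial_t F(\beta, \mathfrak{g}(\beta)) < 0$, so the implicit function theorem yields $\mathfrak{g} \in C^1$ on $(\beta_c, \infty)$ with $\mathfrak{g}'(\beta) = -\partial_\beta F / \partial_t F$. A further symmetry identity $E_{\nu_t}(h_{i+1}) = E_{\mu_{-t}}(h_i)$ turns $\partial_\beta F(\beta, t)$ into $(d-1)[E_{\mu_t}(h) - E_{\mu_{-t}}(h)]$; at the fixed point $t = \mathfrak{g}(\beta) > 0$, the distribution $\mu_t$ is right-shifted from $\mu_{-t}$ on a region where $h$ is non-decreasing (since $\mu_0$ is biased left of center for $\beta > 0$), so the difference is positive and $\mathfrak{g}'(\beta) > 0$. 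Finally, $\mathfrak{g}(\beta) \to 0$ as $\beta \downarrow \beta_c$ is forced by the bifurcation: the non-trivial pair of zeros emerges from the origin precisely when $\partial_t F(\beta, 0)$ changes sign.
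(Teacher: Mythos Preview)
Your high-level strategy coincides with the paper's: exploit the oddness $F(\beta,-t)=-F(\beta,t)$, prove that $t\mapsto F(\beta,t)$ is concave on $[0,\infty)$, read off the zero structure from the sign of $\partial_t F(\beta,0)$, and then use the implicit function theorem together with $\partial_\beta F\ge 0$ to get differentiability and monotonicity of $\mathfrak g$. The reduction of concavity to the variance inequality $\operatorname{Var}_{\mu_t}(i)\ge \operatorname{Var}_{\mu_{-t}}(i)$ via $G(\beta,t)=(d-1)[(k-1)t+\phi(-t)-\phi(t)]$ is correct and pleasant, and your symmetry identity $E_{\nu_t}[h_{i+1}]=E_{\mu_{-t}}[h_i]$ giving $\partial_\beta F=(d-1)\bigl(E_{\mu_t}[h]-E_{\mu_{-t}}[h]\bigr)$ is also correct.

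There are, however, two genuine gaps. First, the variance inequality is only asserted; the proposed ``matching terms into non-negative combinations of $\sinh((i+j)t)$'' is not carried out, and it is not at all clear that such a cancellation scheme exists (the base weights $\binom{k-1}{i}e^{-\beta h_i}$ are \emph{not} symmetric on $\{0,\dots,k-1\}$, since $h_i=h_{k-i}$ rather than $h_{k-1-i}$). The paper avoids this by lifting to a multivariate function $F_2(t_1,\dots,t_{k-1})$ and showing each $\partial_{t_j}^2 F_2\le 0$ via log-supermodularity of $v\mapsto \exp(-\beta H(v)+\sum v_it_i)$ and the Four Function Theorem (log-supermodular marginals give $BC\le AD$), together with an auxiliary inequality $A\ge D$. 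Second, your argument for $\mathfrak g'\ge 0$ fails: although $\mu_t$ stochastically dominates $\mu_{-t}$ for $t>0$, the function $i\mapsto h_i$ is \emph{not} monotone on $\{0,\dots,k-1\}$ (for instance $h_{k-1}=h_1<h_{\lfloor k/2\rfloor}$), so stochastic domination does not give $E_{\mu_t}[h]\ge E_{\mu_{-t}}[h]$, and the heuristic that ``$\mu_0$ is biased left of center'' does not rescue this for all relevant $t$. The paper handles $\partial_\beta F\ge 0$ by a genuinely different device: it symmetrizes the numerator and denominator over $v\leftrightarrow \bar v$ to write them as $\sum_v e^{-\beta H(v)}f(|v|)$ and $\sum_v e^{-\beta H(v)}g(|v|)$, proves that $f/g$ is monotone in $|\,|v|-k/2|$ (hence co-monotone with $H$), and then applies an FKG-type correlation inequality. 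Finally, you invoke ``strict concavity'' to get $\partial_t F(\beta,\mathfrak g(\beta))<0$, but you only argued concavity; a short extra argument (as in the paper's Lemma on $t^+$) is needed, using that $F(\beta,\cdot)$ has a strictly positive interior maximum between $0$ and $\mathfrak g(\beta)$.
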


We extend the function $\mathfrak{g}$  by setting $\mathfrak{g}(\beta)=0$ for all $\beta\le \beta_c$. 

Note that the zero $(\beta,\mathfrak{g}(\beta))$ corresponds to the extremal Gibbs-measure $\mu_1$ with all $1$ boundary conditions.  Similarly, the zero  $(\beta,-\mathfrak{g}(\beta))$ corresponds to the extremal Gibbs-measure $\mu_0$ with all $0$ boundary conditions. In particular, they coincide if and only if $\beta\le \beta_c$. It will be crucial later that we have a first order phase transition, that is, $\mathfrak{g}$ is continuous at $\beta_c$.

For  $(\beta,t)$ such that $F(\beta,t)=0$, we define the Bethe-approximation
\[\Phi(\beta,t)=- d\cdot t\cdot t_v +\frac{d}{k}\log \left(\sum_{i=0}^k {{k}\choose{i}} \exp(-\beta h_i+it)\right) -(d-1) H_2(t_v),\]
where $H_2(x)=-x\log x-(1-x)\log(1-x)$, and 
\[t_v=\frac{\exp(\frac{d}{d-1}t)}{1+\exp(\frac{d}{d-1}t)}.\]

\begin{theorem}\label{TheoremFO2}
For the limiting constant in Theorem \ref{TheoremFO}, we have
\[\Phi(\beta)=\Phi(\beta,\mathfrak{g}(\beta)).\] 
\end{theorem}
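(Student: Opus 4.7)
The plan is to match both sides at $\beta = 0$ and to show they share the same $\beta$-derivative at a.e.\ $\beta$. At $\beta = 0$, $Z_{G_n}(0) = 2^{|V_n|}$, so $\Phi_{G_n}(0) = \log 2$; on the Bethe side $\mathfrak{g}(0) = 0$ and a direct evaluation gives $\Phi(0,0) = (d/k)\log 2^{k} - (d-1)\log 2 = \log 2$. Each $\Phi_{G_n}$ is convex in $\beta$ (its second derivative equals $\mathrm{Var}_{\beta,G_n}(H_{G_n})/|V_n| \ge 0$) and Lipschitz with constant $(d/k)\max_i |h_i|$ independent of $n$. Hence $\Phi$ is convex and absolutely continuous, and it suffices to match $\beta$-derivatives almost everywhere.

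A short computation using $H_2'(t_v) = -\tfrac{d}{d-1}t$ shows that $F(\beta,t) = 0$ is exactly the stationarity condition $\partial_t \Phi(\beta,t) = 0$, where the formula for $\Phi$ is read as a function of $(\beta,t)$ on all of $\mathbb{R}_{\ge 0}\times\mathbb{R}$. Thus $\mathfrak{g}(\beta)$ is a critical point of $\Phi(\beta,\cdot)$, and by the envelope theorem
\[\frac{d}{d\beta}\Phi(\beta, \mathfrak{g}(\beta)) = \partial_\beta \Phi(\beta,t)\Big|_{t = \mathfrak{g}(\beta)} = -\frac{d}{k}\cdot\frac{\sum_{i=0}^{k} h_i\binom{k}{i} e^{-\beta h_i + i\mathfrak{g}(\beta)}}{\sum_{i=0}^{k}\binom{k}{i} e^{-\beta h_i + i\mathfrak{g}(\beta)}}.\]
On the graph side, using $|F_n|/|V_n| = d/k$,
\[\frac{d}{d\beta}\Phi_{G_n}(\beta) = -\frac{d}{k}\cdot \frac{1}{|F_n|}\sum_{f\in F_n} \mathbb{E}_{\beta, G_n}\!\left[h\!\left(\sum_{v\in\partial f} x_v\right)\right],\]
so the theorem reduces to showing this factor-averaged expectation converges to the envelope-theorem formula.

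The main obstacle is exactly this local-marginal convergence. Large girth guarantees that a uniformly chosen factor $f \in F_n$ has a tree-like neighborhood, so every subsequential weak limit of the joint law of $(x_v)_{v \in \partial f}$ is the factor-marginal of some shift-invariant Gibbs measure on $\mathbb{T}$. The $\mathbb{Z}/2$ symmetry $x \mapsto 1-x$, inherited from $h_i = h_{k-i}$, forces these limits to be symmetric, hence convex combinations of the extremal symmetric Gibbs measures: $\tfrac{1}{2}(\mu_0 + \mu_1)$ and, when $\beta > \beta_c$, the paramagnetic measure corresponding to $t = 0$. The delicate step is to rule out a positive weight on the paramagnetic measure for $\beta > \beta_c$; this is a free-energy selection argument, and it is here that the continuity of $\mathfrak{g}$ at $\beta_c$ (the ``first-order'' clause in Lemma \ref{lemmafix}) is crucial, preventing the local limit from ``sticking'' to the paramagnetic branch as $\beta$ crosses $\beta_c$. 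Once the local limit is identified as $\tfrac{1}{2}(\mu_0 + \mu_1)$, the symmetry $h_i = h_{k-i}$ shows that $\mu_0$ and $\mu_1$ yield identical values of $\mathbb{E}[h(\sum_{v \in \partial f} x_v)]$ and that this common value equals the envelope-theorem formula above, which matches the two derivatives and, together with the initial-value agreement at $\beta = 0$, proves the theorem.
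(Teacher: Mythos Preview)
Your overall strategy---match initial values at $\beta=0$ and match $\beta$-derivatives a.e.---is the same interpolation framework the paper uses, and your computation of $\partial_\beta \Phi(\beta,\mathfrak{g}(\beta))$ is correct. But the core of your argument, the ``local-marginal convergence'' paragraph, contains two genuine gaps that the paper avoids entirely by arguing differently.

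First, the claim that the symmetric shift-invariant Gibbs measures on $\mathbb{T}$ are exactly convex combinations of $\tfrac{1}{2}(\mu_0+\mu_1)$ and the paramagnetic measure is asserted without proof, and in general trees admit many more extremal Gibbs measures than the translation-invariant BP fixed points (cf.\ the Ising model on regular trees). Even restricting to automorphism-invariant measures, you have not shown that Lemma~\ref{lemmafix} exhausts the possibilities. Second, the ``free-energy selection argument'' you invoke to rule out the paramagnetic branch is precisely the statement you are trying to prove: selecting the correct branch by free energy presupposes you already know $\Phi(\beta)=\Phi(\beta,\mathfrak{g}(\beta))$ rather than $\Phi(\beta,0)$. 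This is circular, and the continuity of $\mathfrak{g}$ at $\beta_c$ does not by itself resolve it.

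The paper sidesteps both problems by not attempting to identify the limit. For the \emph{upper} bound on $\limsup_n \Phi_{G_n}'(\beta)$, it proves (Lemma~\ref{lemma4}/\ref{lemma22}) that for \emph{every} Gibbs measure $\mu$ on $\mathbb{T}$,
\[
\mathbb{E}_\mu\!\left[-h\!\left(\textstyle\sum_{i\in\partial a}X_i\right)\right]\le a_\beta(\mathfrak{g}(\beta),\dots,\mathfrak{g}(\beta)),
\]
so any subsequential limit automatically satisfies the needed inequality; integrating (via Fatou and Lemma~\ref{interpol}) gives $\limsup_n \Phi_{G_n}(\beta)\le \Phi(\beta,\mathfrak{g}(\beta))$. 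The matching \emph{lower} bound is obtained by a completely separate, non-asymptotic argument: Ruozzi's inequality for log-supermodular graphical models gives $\Phi_{G_n}(\beta)\ge \Phi(\beta,\mathfrak{g}(\beta))$ for every $n$. Your proposal conflates these two halves into a single equality-of-derivatives claim that you do not establish.
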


%We obtain the existence of asymptotic free energy density by combining an inequality of Ruozzi \cite{ruozzi2012bethe} with the so called interpolation scheme, which was used by Dembo, Montanari, Sun~\cite{dembo2013factor} and Sly, Sun \cite{sly2014counting} to obtain similar results. A crucial step in the proof is the following lemma. 

We prove that $\limsup_{n\to\infty} \Phi_{G_n}(\beta)\le\Phi(\beta,\mathfrak{g}(\beta))$ by showing that \[\limsup_{n\to\infty} (\Phi_{G_n}(\beta))'\le(\Phi(\beta,\mathfrak{g}(\beta)))'\] for any $\beta\neq \beta_c$. 

For a $(d,k)$-biregular factor graph $G$ and a $\beta\ge 0$, we define the Gibbs probability measure $\mu_G^\beta$ on $\{0,1\}^V$ by setting
\[\mu_G^\beta(x)=\frac{\exp(-\beta H_G(x))}{Z_G(\beta)}.\] 

Let $X\in \{0,1\}^V$ a random vector with law $\mu_G^\beta$, and let $a$ be a uniform random factor node independent from $X$. Then
\begin{equation*}
\Phi_G'(\beta)=\frac{d}{k}\mathbb{E}\left(-h\left(\sum_{i\in \partial a} X_i\right)\right),
\end{equation*}
where the expectation is over the random choice of $X$ and $a$. 

 Taking an appropriate subsequential weak limit of the Gibbs measures $\mu_{G_n}^\beta$ one can obtain a Gibbs measure $\mu$ on the $(d,k)$-biregular infinite tree $\mathbb{T}$ with the following property. Let $a$ be a fixed variable node of $\mathbb{T}$, let $X\in \{0,1\}^V$ a random vector with law $\mu$, then
\[\limsup_{n\to\infty} \Phi_{G_n}'(\beta)=\frac{d}{k}\mathbb{E}\left(-h\left(\sum_{i\in \partial a} X_i\right)\right),\] 
where the expectation over a random choice of $X$. 

The next lemma is a  crucial ingredient of the proof.  

\begin{lemma}\label{lemma4}
For a fixed $\beta$, let $\mu$ be a Gibbs measure on $\mathbb{T}$. Fix a factor node $a$. Let $X$ be a random element of $\{0,1\}^V$ with law $\mu$. Let $X^{(1)}$ and $X^{(0)}$ be  random elements of $\{0,1\}^V$ with law $\mu_1$ and $\mu_0$. Then
\[\mathbb{E} h\left(\sum_{i\in\partial a} X_i\right) \ge \mathbb{E}h\left(\sum_{i\in\partial a} X_i^{(1)}\right)=\mathbb{E}h\left(\sum_{i\in\partial a} X_i^{(0)}\right).\]

In other words, among all Gibbs-measures, the local expected energy at a factor node is minimized by the Gibbs-measure with all $1$ or all $0$ boundary conditions.
\end{lemma}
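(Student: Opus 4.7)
The equality $\mathbb{E} h(\sum_i X^{(1)}_i) = \mathbb{E} h(\sum_i X^{(0)}_i)$ follows immediately from the $0 \leftrightarrow 1$ symmetry of the model: the global spin flip interchanges $\mu_1$ and $\mu_0$, sends $\sum_{i \in \partial a} X_i$ to $k - \sum_{i \in \partial a} X_i$, and leaves $h$ invariant since $h_s = h_{k-s}$. For the inequality, I would first reduce to extremal Gibbs measures: the map $\mu \mapsto \mathbb{E}_\mu h(\sum_{i \in \partial a} X_i)$ is affine on the convex set of Gibbs measures, so it suffices to treat the extremal ones. On $\mathbb{T}$ every extremal Gibbs measure is Markov and hence described by a BP fixed point. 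Writing $t_i := t_{v_i \to a}$ for the log-likelihood message coming from the subtree rooted at $v_i$ (once $a$ is deleted), the marginal of such a $\mu$ on $\partial a$ is
\[\mu(x_{\partial a}) \;\propto\; \exp\!\Big(\sum_i t_i x_{v_i} - \beta h\big(\sum_i x_{v_i}\big)\Big),\]
and the full collection $(t_{v \to f})$ is a BP fixed point on the whole tree.

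I would then show that every BP fixed point satisfies $t_{v \to a} \in [-\mathfrak{g}(\beta), \mathfrak{g}(\beta)]$. Concavity of $h$ makes $h_{i+1} - h_i$ non-increasing in $i$, which translates into the factor-to-variable BP update being monotone non-decreasing in every incoming variable-to-factor argument; hence the full BP update is a monotone operator on the space of message configurations. By Knaster--Tarski it has componentwise greatest and least fixed points, and iterating from $\pm\infty$ preserves the fully symmetric (all-equal) configuration, so these extremes coincide with the symmetric fixed points $\pm\mathfrak{g}(\beta)$ from Lemma~\ref{lemmafix}. Consequently it suffices to prove
\[E(t_1, \ldots, t_k) \;:=\; \mathbb{E}_t h\!\Big(\sum_i X_{v_i}\Big) \;\geq\; E(\mathfrak{g}(\beta), \ldots, \mathfrak{g}(\beta))\]
for every $(t_1, \ldots, t_k) \in [-\mathfrak{g}(\beta), \mathfrak{g}(\beta)]^k$.

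I would split this into two steps: (i) Schur-convexity, $E(t_1, \ldots, t_k) \geq E(\bar t, \ldots, \bar t)$ where $\bar t := \frac{1}{k}\sum_i t_i$, and (ii) monotonicity, $s \mapsto E(s, \ldots, s)$ is non-increasing in $|s|$ on $[0, \mathfrak{g}]$. Since $|\bar t| \leq \mathfrak{g}$ when all $|t_i| \leq \mathfrak{g}$, these combine to give the claim. Step (ii) follows from a direct computation: $\tfrac{d}{ds} E(s, \ldots, s)$ equals the covariance of $h(S)$ and $S$ under the tilted law $P_s$, this covariance vanishes at $s = 0$ by the $S \leftrightarrow k-S$ symmetry, and writing $h(s) = h(k/2) - d(s)$ with $d$ convex and symmetric about $k/2$ shows it is non-positive for $s \in (0, \mathfrak{g}]$ because the tilt moves $S$ off-center and $d$ is larger away from $k/2$. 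For step (i), conditioning on $c := X_{v_i} + X_{v_j}$ one checks that $\mathbb{E}[X_{v_i} - X_{v_j} \mid c]$ vanishes for $c \in \{0, 2\}$ and equals $\tanh((t_i - t_j)/2)$ for $c = 1$, and moreover $X_{v_i} - X_{v_j}$ is conditionally independent of the remaining coordinates given $c = 1$, so the total-covariance decomposition yields
\[\partial_{t_i} E - \partial_{t_j} E \;=\; \tanh\!\Big(\tfrac{t_i - t_j}{2}\Big)\, P(c=1)\, \big(\mathbb{E}[h(S) \mid c = 1] - \mathbb{E}[h(S)]\big).\]
Schur-convexity thus reduces to the single inequality $\mathbb{E}[h(S) \mid X_{v_i} + X_{v_j} = 1] \geq \mathbb{E}[h(S)]$.

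The main obstacle is proving this last inequality. Intuitively it holds because conditioning two of the neighboring variables to disagree pushes the total $S = 1 + T$ (with $T = \sum_{l \neq i, j} X_{v_l}$) toward the center $k/2$ where $h$ is maximal, whereas conditioning them to agree pushes $S$ toward one of the endpoints $0$ or $k$ where $h$ is smaller. To make this precise one has to analyze the $c$-dependent conditional weights $P(T \mid c) \propto e_T\!\big((e^{t_l})_{l \neq i, j}\big) \exp(-\beta h(c + T))$, where $e_T$ is the degree-$T$ elementary symmetric polynomial; I expect the proof to proceed by combining the concavity of $h$ with Newton-type log-concavity of the elementary symmetric polynomials.
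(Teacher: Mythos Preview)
Your reduction to extremal Gibbs measures and the bounding of all messages by $\pm\mathfrak{g}(\beta)$ is correct and matches the paper (the paper iterates the monotone BP map rather than invoking Knaster--Tarski, but this is cosmetic). The divergence is in the final optimization step, where you have a genuine gap.

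Your plan is to prove $E(t)\ge E(\mathfrak{g},\dots,\mathfrak{g})$ on the cube via Schur-convexity plus diagonal monotonicity. You correctly reduce Schur-convexity (via the Schur--Ostrowski criterion) to the single inequality $\mathbb{E}[h(S)\mid X_{v_i}+X_{v_j}=1]\ge \mathbb{E}[h(S)]$, but then leave this unproven. The problem is not just that the argument is incomplete: the inequality is \emph{false} without the cube constraint. Take $k=4$, $h=(0,1,2,1,0)$, $\beta=0$, $t=(2,2,2,-6)$, and $i=1$, $j=4$. Then the $X_l$ are independent Bernoullis, and conditioning on $X_1+X_4=1$ (which is essentially the typical event here) one computes $\mathbb{E}[h(S)\mid c=1]\approx 1.21$, whereas $\mathbb{E}[h(S)]\approx 1.27$. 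So $E$ is \emph{not} Schur-convex on $\mathbb{R}^k$, and your intuition (``disagreement pushes $S$ toward the center'') is not robust: when the remaining $t_l$ are strongly biased, the ``agreement'' event $c=0$ can be the one that centers $S$. Consequently any valid proof of your key inequality on $[-\mathfrak{g},\mathfrak{g}]^k$ must use the cube constraint in an essential way, which your sketch does not. Your step (ii) is also only heuristically justified.

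The paper does not go through Schur-convexity. Instead it symmetrizes in $i\leftrightarrow k-i$ and proves the single monotonicity statement that, with $z=e^{\mathfrak{g}(\beta)}$ and $f(i)=e_i(e^{t_1},\dots,e^{t_k})/\binom{k}{i}$, the ratio
\[
\frac{f(i)+f(k-i)}{z^i+z^{k-i}}
\]
is nondecreasing on $\{0,1,\dots,\lfloor k/2\rfloor\}$. The cube constraint enters exactly here, through Newton's inequalities in the form $z^{-1}\le f(i+1)/f(i)\le z$; the proof is then a short (but delicate) case analysis. This monotonicity, combined with the fact that both $h_i$ and the above ratio depend only on $\min(i,k-i)$, feeds into an FKG-type correlation inequality to give $a_\beta(t)\le a_\beta(\mathfrak{g},\dots,\mathfrak{g})$ directly, without passing through the diagonal. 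In short, the paper's key lemma already bakes in the constraint $|t_l|\le\mathfrak{g}$, which is precisely what your Schur-convexity route lacks.
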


We will later see that
\[\frac{d}{k}\mathbb{E}h\left(\sum_{i\in\partial a} X_i^{(1)}\right)=-(\Phi(\beta,\mathfrak{g}(\beta)))'.\]
Thus, putting everything together, we get the desired inequality \[\limsup_{n\to\infty} (\Phi_{G_n}(\beta))'\le(\Phi(\beta,\mathfrak{g}(\beta)))'\] for any $\beta\neq \beta_c$. See Section \ref{putt} for more details. 

This method that we use to obtain an upper bound on the asymptotic free energy density is called interpolation scheme. This method has been applied by Dembo, Montanari, Sun~\cite{dembo2013factor} and Sly, Sun \cite{sly2014counting} to obtain similar results. 

To prove that  $\liminf_{n\to\infty} \Phi_{G_n}(\beta)\ge\Phi(\beta,\mathfrak{g}(\beta))$, we will use an inequality of Ruozzi \cite{ruozzi2012bethe}. In fact this inequality provides us the even stronger statement that $\Phi_{G_n}(\beta)\ge\Phi(\beta,\mathfrak{g}(\beta))$ for all $n$. 

\textbf{The structure of the paper.} In Section \ref{sec2}, we recall the notion of BP fixed points and their relation to extremal Gibbs-measures. We  calculate the derivative of the Bethe-approximation with respect to the inverse temperature. We also state a few well-known correlation inequalities. In Section \ref{sec3}, we characterize the zero set of $F$. In Section \ref{sec4}, we prove Lemma \ref{lemma4}. Finally, Section~\ref{putt} contains the proof of Theorem \ref{TheoremFO} and Theorem \ref{TheoremFO2}.
 
\textbf{Acknowledgements.} The author is grateful to P\'eter  Csikv\'ari and Mikl\'os Ab\'ert  for their comments. The author was partially
supported by the ERC Consolidator Grant 648017.

%We finish this introduction with a few historical remarks. Classically, Gibbs-measures were investigated mainly on $\mathbb{Z}^d$. In that case the existence of asymptotic free energy density can be established for various models and various subsets of $\mathbb{Z}^d$. The proofs mostly exploit the fact that $\mathbb{Z}^d$ can be tiled by finite subsets of small relative boundary. However, when we consider for example a $3$-regular tree, then all finite subsets of that tree have large boundary. Thus, Gibbs-measures on a $3$-regular tree usually can not be well approximated by finite subgraphs of that tree. As it turns out, random $3$-regular finite graphs provide a better approximation. Although, for certain methods we really need to exploit the randomness of these approximating graph sequences. In certain cases, it sufficient to consider a finite approximation that locally looks like the infinite object in question. More formally this       
  
\section{Preliminaries}\label{sec2}

\subsection{BP fixed point on finite trees}

For a more detailed introduction to belief propagation, we refer the reader to Chapter 14 of the book by M\'ezard and Montanari \cite{mezard2009information}. We also try to follow their notations in this subsection.

 For our limited purposes in this subsection, a factor graph will mean a triple $(G,V,F)$, where $G$ is a finite bipartite graph, $(V,F)$ is a proper two coloring of the nodes of $G$, moreover, we also assume that each vertex in $F$ has degree $k$. As before, nodes in $V$ are called variable nodes, and nodes in $F$ are called factor nodes. Given a factor node $a$ and $x_{\partial a}\in \{0,1\}^{\partial a}$, we define
\[\psi_a(x_{\partial a})=\exp\left(-\beta h\left(\sum_{i\in \partial a} x_i\right) \right),\]
where $\beta\ge 0$ is a fixed parameter. With each factor graph $(G,V,F)$, we associate a random vector $x\in \{0,1\}^V$, by setting
\[p(x)=\frac{1}{Z} \prod_{a\in F} \psi_a(x_{\partial a})\]
for all $x\in \{0,1\}^V$. Here $Z$ is an appropriate normalizing constant. 

Let $ai$ be an edge of $G$ such that $a\in F$ and $i\in V$. 

We define $\left(\nu_{i\to a}(x_i)\right)_{x_i\in \{0,1\}}$, as the marginal of $x_i$ in the random vector corresponding to the factor graph obtained from $G$ by deleting the factor node $a$.

We define $\left(\hat{\nu}_{a\to i}(x_i)\right)_{x_i\in \{0,1\}}$, as the marginal of $x_i$ in the random vector corresponding to the factor graph obtained from $G$ by deleting factor nodes in $\partial i\backslash a$.

For the proof of the next lemma, see for example \cite[Theorem 14.1]{mezard2009information}.
\begin{lemma} \label{lemmameseq}
If $G$ is  a forest, then  the messages $\nu_{i\to a}(x_i)$ and $\hat{\nu}_{i\to a}(x_i)$ are uniquely determined by the following  equations. For each edge $ai$ of $G$ such that $a\in F$ and $i\in V$, and $x_i\in \{0,1\}$, we have
\begin{align*}
\nu_{i\to a}(x_i)&=\frac{\prod_{b\in \partial i\backslash a} \hat{\nu}_{b\to i}(x_i)}{\sum_{y_i}\prod_{b\in \partial i\backslash a} \hat{\nu}_{b\to i}(y_i)},\\
\hat{\nu}_{a\to i}(x_i)&=\frac{\sum_{x_{\partial a\backslash i}} \psi_a(x_{\partial a})\prod_{j\in \partial a\backslash i} {\nu}_{j\to a}(x_i)}{\sum_{y_{\partial a}} \psi_a(y_{\partial a})\prod_{j\in \partial a\backslash i} {\nu}_{j\to a}(y_i)}.
\end{align*}
\end{lemma}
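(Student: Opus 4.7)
The plan is to exploit that $G$, being a forest, decomposes into disjoint subtrees whenever we delete a node. Fix an edge $ai$ with $a\in F$, $i\in V$. I would derive each of the two BP equations from such a decomposition, and then establish uniqueness by a leaf-to-root induction.

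For the first equation, I consider $G\setminus\{a\}$. Since $G$ is a forest, the component of $i$ in this modified graph is $\{i\}$ together with variable-disjoint subtrees $T_b$ attached at $i$ through each factor $b\in \partial i\setminus a$. Conditional on $x_i$, the variables living in different $T_b$'s are independent, so the marginal factorizes as $\nu_{i\to a}(x_i)\propto \prod_{b\in \partial i\setminus a}W_b(x_i)$, where $W_b(x_i)$ is the partition function of $T_b$ with $x_i$ pinned. The point is that $W_b(x_i)$ equals $\hat{\nu}_{b\to i}(x_i)$ up to a constant in $x_i$: in the definition of $\hat{\nu}_{b\to i}$ one deletes every factor in $\partial i\setminus b$ from $G$, leaving $\{i\}\cup T_b$ as the component of $i$, so the marginal at $x_i$ there is proportional to $W_b(x_i)$. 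Dividing by $\sum_{y_i}\prod_b \hat{\nu}_{b\to i}(y_i)$ then yields the first equation.

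For the second equation, I consider $G$ with the factors $\partial i\setminus a$ removed. The component of $i$ is now $\{i,a\}\cup(\partial a\setminus i)\cup\bigcup_{j\in \partial a\setminus i} T_j$, where $T_j$ is the subtree on the side of $j$ away from $a$. Summing over $x_{\partial a\setminus i}$ first gives $\hat{\nu}_{a\to i}(x_i)\propto \sum_{x_{\partial a\setminus i}}\psi_a(x_{\partial a})\prod_{j\in \partial a\setminus i} U_j(x_j)$, where $U_j(x_j)$ is the partition function of $T_j$ with $x_j$ pinned. Since $\nu_{j\to a}(x_j)$ is the marginal at $x_j$ in $G\setminus\{a\}$ and the component of $j$ there is exactly $T_j$, we get $U_j(x_j)\propto \nu_{j\to a}(x_j)$, and normalizing produces the stated formula (the $\nu_{j\to a}(x_i)$ in the displayed equation is clearly a typo for $\nu_{j\to a}(x_j)$).

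Uniqueness I would settle by induction from the leaves. A leaf variable $i$ with sole factor neighbor $a$ forces $\nu_{i\to a}(x_i)=1/2$ via the empty product in the first equation; a leaf factor $a$ with sole variable neighbor $i$ has $\hat{\nu}_{a\to i}$ determined directly from $\psi_a$ via the empty product in the second. Processing edges in order of increasing distance from the leaves, each $\nu_{i\to a}$ needs only the $\hat{\nu}_{b\to i}$ with $b\in\partial i\setminus a$ and each $\hat{\nu}_{a\to i}$ needs only the $\nu_{j\to a}$ with $j\in \partial a\setminus i$, so every message is pinned down exactly once. The main technical point throughout will be checking that the ``other'' components of each modified factor graph (those not containing $i$ or $j$) contribute only $x_i$-independent or $x_j$-independent normalizations, so that the proportionality constants line up cleanly; this is routine once the decomposition is set up, and the result is in any case classical (see Theorem~14.1 of \cite{mezard2009information}).
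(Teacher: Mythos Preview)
Your argument is correct and is the standard one; the paper does not give a proof at all but simply refers to \cite[Theorem~14.1]{mezard2009information}, the same reference you cite at the end. So there is nothing to compare---you have supplied the routine tree-decomposition proof that the paper omits.
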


Let $F_R$ be a subset of function nodes, $V_R$ be the subset of variable nodes adjacent to $F_R$, and $R=F_R\cup V_R$. Assume that the subgraph of $G$ induced by $R$ is a connected. Then we have the following formula for the marginal $x_{V_R}$, provided that $G$ is a forest.

\begin{equation}\label{eqmarg}
p(x_{V_R})=\frac{1}{Z}\prod_{a\in F_R} \psi_a(x_{\partial a}) \prod_{a\in \partial R} \hat{\nu}_{a\to i(a)}(x_{i(a)}), 
\end{equation} 
where $i(a)$ is the unique neighbor of $a$ in $V_R$. For a proof, see \cite[Section 14.2.3]{mezard2009information}.

It will be more convenient to use the so called log-likelihood ratio to parametrize the messages. That is, let
\[u_{i\to a}=\log\frac{\nu_{i\to a}(1)}{\nu_{i\to a}(0)}\quad\text{ and }\quad\hat{u}_{a\to i}=\log\frac{\hat{\nu}_{a\to i}(1)}{\hat{\nu}_{a\to i}(0)}.\] 

With these notations, the equations in Lemma \ref{lemmameseq} become
\begin{align}
u_{i\to a}&=\sum_{b\in \partial i\backslash a} \hat{u}_{b\to i},\label{ueq1}\\
\hat{u}_{a\to i}&=\log\frac{\sum_{x_{\partial a}, x_i=1 } \psi_a(x_{\partial_a}) \exp\left(\sum_{j\in \partial a\backslash i} x_j u_{j\to a}\right)}{\sum_{x_{\partial a}, x_i=0 } \psi_a(x_{\partial_a}) \exp\left(\sum_{j\in \partial a\backslash i} x_j u_{j\to a}\right)}\label{ueq2}.
\end{align}
Moreover, Equation \eqref{eqmarg} becomes
\[p(x_R)=\frac{1}{Z}\prod_{a\in F_R} \psi_a(x_{\partial a}) \exp\left(\sum_{a\in \partial R} x_{i(a)}\hat{u}_{a\to i(a)}\right).\]
Note that the normalizing constant $Z$ can be different from the constant in Equation \eqref{eqmarg}.

\subsection{BP fixed points on the $(d,k)$-biregular infinite tree}\label{subsecT}

Let $\mathbb{T}$ be an infinite connected tree with a proper two coloring $(V,F)$, such that the nodes in $V$ have degree $d$, and the nodes in $F$ have degree $k$. We call $\mathbb{T}$  the $(d,k)$-biregular infinite tree. To each edge $ai$ of $\mathbb{T}$ with $a\in F$ and $i\in V$, we assign two real numbers $u_{i\to a}$ and $\hat{u}_{a\to i}$. We say the these numbers from a BP fixed point, if they satisfy Equation \eqref{ueq1} and Equation \eqref{ueq2} for all edges.

We will be particularly interested in BP fixed points such that for all edges, we have $u_{i\to a} =t$ for a given $t$.  Such a fixed point exists, if and only if
\begin{equation}\label{BPfixedpointEq}
t=(d-1)\log \frac{\sum_{i=0}^{k-1} {{k-1}\choose{i}}\exp(-\beta h_{i+1}+ti)}{\sum_{i=0}^{k-1} {{k-1}\choose{i}}\exp(-\beta h_{i}+ti)}.
\end{equation}
This motivates the definition of the function $F(\beta,t)$ in the Introduction.     

Now, we shall discuss the analogue of Equation \eqref{eqmarg}. To do this, we need the notion of Gibbs measures. Let $\mu$ be a measure on $\{0,1\}^V$. We say that $\mu$ is a Gibbs measure, if it satisfies the following property for every finite subset $V_R$ of $V$. Let $F_R$ be the set of function nodes that has a neighbor in $V_R$, let $R=V_R\cup F_R$. Note that $\partial R\subset V$. Let $X$ be a random element of $\{0,1\}^V$ with law $\mu$. Then for all $x_{\partial R}\in \{0,1\}^{\partial R}$ and $x_{V_R}\in \{0,1\}^{V_R}$, we have
\[\mathbb{P}(X_{V_R}=x_{V_R}|X_{\partial R}=x_{\partial R})=\frac{1}{Z} \prod_{a\in R_F} \psi_a(x_{\partial a}),\]
where $Z$ only depends on $x_{\partial R}$, but not on $x_{V_R}$.   

A Gibbs measure is called extremal if it can not be written as a non-trivial convex combination of two different Gibbs measures.
\begin{lemma}\label{lemma5}
Let $\mu$ be an extremal Gibbs measure. Then there is a BP fixed point $(u_{i\to a}),(\hat{u}_{a\to i})$ with the following property.
Let $F_R$ be a finite subset of function nodes, $V_R$ be the subset of variable nodes adjacent to $F_R$, and $R=F_R\cup V_R$. Assume that the subgraph of $G$ induced by $R$ is a connected. Let $X$ be a random element of $\{0,1\}^V$ with law $\mu$, then for any $x_{V_R}\in \{0,1\}^{V_R}$, we have
\[\mathbb{P}(X_{V_R}=x_{V_R})=\frac{1}{Z}\prod_{a\in F_R} \psi_a(x_{\partial a}) \exp\left(\sum_{a\in \partial R} x_{i(a)}\hat{u}_{a\to i(a)}\right),\] 
where $Z$ only depend on the choice of $V_R$.
\end{lemma}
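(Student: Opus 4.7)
The plan is to build the BP fixed point as an $n\to\infty$ limit of the finite BP messages of Lemma \ref{lemmameseq} on an exhausting sequence of finite subtrees of $\mathbb T$, using the extremality of $\mu$ to make this limit deterministic, and then to obtain the stated factorization by passing to the limit in Equation \eqref{eqmarg}.

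Concretely, I fix an increasing sequence $B_1\subset B_2\subset\cdots$ of finite subtrees of $\mathbb T$ whose union is $\mathbb T$, with $R\subset B_n$ for $n$ large. For each $n$ and boundary configuration $\xi\in\{0,1\}^{\partial V_{B_n}}$, the Gibbs property of $\mu$ tells us that $\mu(\,\cdot\mid X_{V\setminus V_{B_n}}=\xi)$ restricted to $V_{B_n}$ is the finite-volume Gibbs measure on $B_n$ with boundary $\xi$. Let $\hat\nu^{\xi,n}_{a\to i}$ and $\nu^{\xi,n}_{i\to a}$ denote the corresponding finite BP messages. Applying Equation \eqref{eqmarg} to this finite factor-graph measure gives
\[\mu(X_{V_R}=x_{V_R}\mid X_{V\setminus V_{B_n}}=\xi)=\frac{1}{Z_n(\xi)}\prod_{a\in F_R}\psi_a(x_{\partial a})\prod_{a\in\partial R}\hat\nu^{\xi,n}_{a\to i(a)}(x_{i(a)}).\]

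The crucial step is to show that for each directed edge $a\to i$ the random variable $\xi\mapsto\hat\nu^{\xi,n}_{a\to i}(x_i)$ converges $\mu$-almost surely to a deterministic limit. The point is that $\hat\nu^{\xi,n}_{a\to i}$ depends only on the coordinates of $\xi$ lying in the subtree $T_a$ obtained by removing $i$ from $\mathbb T$ and keeping the component of $a$; hence it is measurable with respect to $\sigma(X_j:j\in T_a\cap(V\setminus V_{B_n}))$. As $n\to\infty$, these $\sigma$-algebras form a decreasing family whose intersection lies in the tail $\sigma$-algebra of $\mu$, which is trivial by extremality. Together with the uniform boundedness of the messages (the potentials $\psi_a$ are bounded away from $0$ and $\infty$), this forces $\mu$-a.s.\ convergence to a constant $\hat\nu_{a\to i}(x_i)$. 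Setting $\hat u_{a\to i}:=\log(\hat\nu_{a\to i}(1)/\hat\nu_{a\to i}(0))$ and analogously $u_{i\to a}$, and passing to the limit in the finite BP relations \eqref{ueq1}--\eqref{ueq2}, yields a BP fixed point on $\mathbb T$. Integrating the display above against $d\mu(\xi)$ and letting $n\to\infty$ then gives the claimed marginal formula, with $Z$ absorbing the remaining normalization.

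The main obstacle is the $\mu$-a.s.\ convergence of the messages. Tail-triviality rules out non-constant limits and boundedness gives subsequential limits, but one still needs that all subsequential limits coincide so that the sequence itself converges. This can be done by a martingale-style argument using that the finite BP recursion at each factor node is a fixed smooth map with uniform contraction on compact subsets once the messages are in a common bounded range, or alternatively by appealing to monotonicity properties of the BP recursion in an appropriate stochastic order.
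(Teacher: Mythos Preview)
Your strategy is quite different from the paper's. The paper, following Zachary, avoids any limiting procedure: it first uses that an extremal Gibbs measure on a tree is a tree-indexed Markov chain, and then \emph{defines} the messages directly from $\mu$, essentially by setting $\hat\ell_{a\to i}(x_i)=\psi_a(x_i0)\,\mathbb P(X_i=x_i)/\mathbb P(X_{\partial a}=x_i0)$ and $\hat u_{a\to i}=\log(\hat\ell_{a\to i}(1)/\hat\ell_{a\to i}(0))$. The factorization and the BP equations are then checked by a short algebraic computation using only the Gibbs and Markov properties of $\mu$; no convergence argument is needed at all.

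Your proof has a genuine gap exactly at the convergence step you flag. Tail-triviality together with boundedness does \emph{not} force a.s.\ convergence of $\hat\nu^{\xi,n}_{a\to i}$: a bounded sequence $(Y_n)$ with $Y_n$ measurable for a decreasing filtration with trivial intersection need not converge (take $Y_n=X_n$ for an i.i.d.\ sequence). Neither of your proposed remedies closes the gap. The BP recursion is \emph{not} a contraction in this model---for $\beta>\beta_c$ the map $t\mapsto (d-1)m(t,\dots,t)$ has three distinct fixed points by Lemma~\ref{lemmafix}, which rules out any uniform contraction on a region containing them---and the monotonicity suggestion is not developed into an argument for a.s.\ convergence of random messages. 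The clean repair is to apply reverse martingale convergence not to the messages but to the conditional probabilities themselves: $\mu(X_{V_R}=x_{V_R}\mid\mathcal F_n)\to\mu(X_{V_R}=x_{V_R})$ $\mu$-a.s.\ by tail-triviality. Taking $F_R=\partial i\setminus\{a\}$ and dividing the value at $(x_i=1,\,x_{V_R\setminus i}=0)$ by the value at $x_{V_R}=0$ isolates $\exp(\hat u^{\xi,n}_{a\to i})$ up to a fixed constant, so each individual message converges a.s.\ to a deterministic limit; the rest of your argument then goes through.
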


For pairwise models this was proved by Zachary \cite{zachary1983countable}. Since our setting is slightly different, we provide a proof in the appendix, although the original proof can be repeated almost word by word.

\subsection{Bethe-approximation}\label{subsecbethe}

Let $G$ be a finite $(d,k)$-biregular factor graph. For $i\in V$, let $\tau_i$ be a probability distribution on $\{0,1\}$. For $a\in F$, let $\tau_a$ be a probability distribution on $\{0,1\}^{\partial a}$. We say that $\tau=\left((\tau_i)_{i\in V},(\tau_a)_{a\in F}\right)$ is in the local marginal polytope $\mathcal{T}$,  if for all $i\in V$ and $a\in V$ such that $ai$ is an edge of $G$, we have
\[\tau_i(x_i)=\sum_{x_{\partial a\backslash i}} \tau_a(x_{\partial a}).\] 

For an element $\tau\in \mathcal{T}$, we define the Bethe approximation at inverse temperature $\beta$ as
\begin{align*}
\Phi_G(\beta,\tau)=&\frac{1}{|V|}\Bigg(-\beta \sum_{a\in F} \sum_{x_{\partial a}} \tau_a(x_{\partial a}) h\left(\sum_{i\in \partial a} x_i\right)\\&-\sum_{a\in F}\sum_{x_{\partial a}}\tau_a(x_{\partial a})\log \tau_a(x_{\partial a}) +(d-1)\sum_{i\in V}\sum_{x_i}\tau_i(x_i)\log \tau_i(x_i)\Bigg) . 
\end{align*}

Let $t=t(\beta)$ be a solution of Equation \eqref{BPfixedpointEq}. With the help of this, we will construct an element $\tau=\tau^{(\beta)}$ of $\mathcal{T}$.   For any $a\in F$, we set
\[\tau_a(x_{\partial a})=\frac{1}{Z} \exp\left(-\beta h\left(\sum_{i\in \partial a} x_i\right)+t\sum_{i\in \partial a} x_i\right ),\]
where 
\[Z=\sum_{i=0}^k {{k}\choose{i}} \exp(-\beta h_i+it).\]  

Let \[t_v=\frac{\exp(\frac{d}{d-1}t)}{1+\exp(\frac{d}{d-1}t)}.\] 

For $i\in V$, we set %let $\tau_i(0)$ and $\tau_i(0)$ be defined by the equations 
\begin{align*}
\tau_i(0)&=1-t_v,\\
\tau_i(1)&=t_v.
\end{align*}

Then $\tau$ is indeed an element of $\mathcal{T}$. To see this, let $a\in F$, $j\in \partial a$, and set $x_j=1$. Then

\begin{align*}
\sum_{x_{\partial a\backslash j}} \tau_a(x_a)&=\frac{\sum_{i=0}^{k-1} {{k-1}\choose{i}} \exp(-\beta h_{i+1} +(i+1)t)}{\sum_{i=0}^{k} {{k}\choose{i}} \exp(-\beta h_{i} +it)}\\&=\frac{\sum_{i=0}^{k-1} {{k-1}\choose{i}} \exp(-\beta h_{i+1} +(i+1)t)}{\sum_{i=0}^{k-1} {{k-1}\choose{i}} \exp(-\beta h_{i+1} +(i+1)t)+\sum_{i=0}^{k-1} {{k-1}\choose{i}} \exp(-\beta h_{i} +it)}\\&=\left(1+\exp(-t)\left( \frac{\sum_{i=0}^{k-1} {{k-1}\choose{i}} \exp(-\beta h_{i+1} +it)}{\sum_{i=0}^{k-1} {{k-1}\choose{i}} \exp(-\beta h_{i} +it)}\right)^{-1}\right)^{-1}
\\&=\left(1+\exp(-t)\exp\left(-\frac{t}{d-1}\right)\right)^{-1}=t_v=\tau_j(1).
\end{align*}

With this choices of $\tau=\tau^{(\beta)}$, we have
\begin{align*}
{\Phi_G(\beta,\tau)}=&- \frac{d}{k} \frac{\sum_{i=0}^k {{k}\choose{i}} it(\beta)\exp(-\beta h_i+it(\beta))}{\sum_{i=0}^k {{k}\choose{i}} \exp(-\beta h_i+it(\beta))}+\frac{d}{k}\log \sum_{i=0}^k {{k}\choose{i}} \exp(-\beta h_i+it(\beta))\\& -(d-1) H_2(t_v(\beta)),%(\frac{\frac{d}{d-1}t(\beta) \exp(\frac{d}{d-1}t(\beta))}{1+\exp(\frac{d-1}{d}t(\beta))}-\log(1+\exp(\frac{d}{d-1}t(\beta)))),
\end{align*}
where $H_2(x)=-x\log x-(1-x)\log(1-x)$.

Note that
 \begin{align*}
&\frac{\sum_{i=0}^k {{k}\choose{i}} i\exp(-\beta h_i+it(\beta))}{\sum_{i=0}^k {{k}\choose{i}} \exp(-\beta h_i+it(\beta))}
\\&=k\exp(t(\beta))\frac{\sum_{i=0}^{k-1} {{k-1}\choose{i}} \exp(-\beta h_{i+1}+it(\beta))}{\sum_{i=0}^k {{k}\choose{i}} \exp(-\beta h_i+it(\beta))}%\\%&=
\\&=k\exp(t(\beta))\frac{\sum_{i=0}^{k-1} {{k-1}\choose{i}} \exp(-\beta h_{i+1}+it(\beta))}{\sum_{i=0}^{k-1} {{k-1}\choose{i}} \exp(-\beta h_i+it(\beta))+\exp(t(\beta))\sum_{i=0}^{k-1} {{k-1}\choose{i}} \exp(-\beta h_{i+1}+it(\beta))}
\\&=k\exp(t(\beta))\left(\exp(t(\beta))+\left(\frac{\sum_{i=0}^{k-1} {{k-1}\choose{i}} \exp(-\beta h_{i+1}+it(\beta))}{\sum_{i=0}^{k-1} {{k-1}\choose{i}} \exp(-\beta h_i+it(\beta))}\right)^{-1}\right)^{-1}
%k\beta\exp(t(\beta))\exp\left(\frac{f(\beta,t)}{d-1}\right),
\\&=\frac{k\exp(t(\beta))}{\exp(t(\beta))+\exp(-\frac{t(\beta)}{d-1})}=k\frac{\exp(\frac{d}{d-1}t(\beta))}{1+\exp(\frac{d}{d-1}t(\beta))}\\&=kt_v.
 \end{align*}

Thus,
\begin{align*}
{\Phi_G(\beta,\tau)}=&- d t(\beta)t_v(\beta) +\frac{d}{k}\log \sum_{i=0}^k {{k}\choose{i}} \exp(-\beta h_i+it(\beta))\\& -(d-1) H_2(t_v(\beta)),%(\frac{\frac{d}{d-1}t(\beta) \exp(\frac{d}{d-1}t(\beta))}{1+\exp(\frac{d-1}{d}t(\beta))}-\log(1+\exp(\frac{d}{d-1}t(\beta)))). 
\end{align*}

Note that ${\Phi_G(\beta,\tau)}$ only depends on $t$, but not on the graph $G$. Thus we introduce the notation 
\begin{align*}\Phi(\beta,t(\beta))&=- d t(\beta)t_v(\beta) +\frac{d}{k}\log \sum_{i=0}^k {{k}\choose{i}} \exp(-\beta h_i+it(\beta))\\& -(d-1) H_2(t_v(\beta)).
\end{align*}
 
% with  the notation
% \[f(\beta,t)=(d-1) \log \frac{\sum_{i=0}^{k-1} {{k-1}\choose{i}} \exp(-\beta h_{i+1}+it(\beta))}{\sum_{i=0}^k {{k}\choose{i}} \exp(-\beta h_i+it(\beta))}\]
%  
% Thus,
% \begin{align*}
% \frac{Z_G(\beta,\tau)}{|V|}=&\frac{d}{k}\log \sum_{i=0}^k {{k}\choose{i}} \exp(-\beta h_i+it(\beta))\\&- \frac{d\beta}{1+\exp(-\frac{d}{d-1}t(\beta))} -(d-1)H_2(\frac{\exp(\frac{d}{d-1}t(\beta))}{1+\exp(\frac{d}{d-1}t(\beta))})%+ \frac{d t(\beta) \exp(\frac{d}{d-1}t(\beta))}{1+\exp(\frac{d-1}{d}t(\beta))}-(d-1)\log(1+\exp(\frac{d}{d-1}t(\beta))). 
% \end{align*}

Assume that for each $\beta$, we have a solution  $t(\beta)$ of Equation \eqref{BPfixedpointEq}. Fix a $\beta$ such that derivative $t'(\beta)$ exists. In the rest of this subsection, our aim is to give a formula for $\partial_\beta \Phi(\beta,t(\beta))$. 

Now 
\begin{align*}
\partial_\beta &\frac{d}{k}\log \sum_{i=0}^k {{k}\choose{i}} \exp(-\beta h_i+it(\beta))\\&=
\frac{d}{k}\frac{-\sum_{i=0}^k {{k}\choose{i}} h_i\exp(-\beta h_i+it(\beta))}{\sum_{i=0}^k {{k}\choose{i}} \exp(-\beta h_i+it(\beta))}+t'(\beta)\frac{d}{k}\frac{\sum_{i=0}^k {{k}\choose{i}} it(\beta)\exp(-\beta h_i+it(\beta))}{\sum_{i=0}^k {{k}\choose{i}} \exp(-\beta h_i+it(\beta))}\\&=\frac{d}{k}\frac{-\sum_{i=0}^k {{k}\choose{i}} h_i\exp(-\beta h_i+it(\beta))}{\sum_{i=0}^k {{k}\choose{i}} \exp(-\beta h_i+it(\beta))}+dt'(\beta)t_v(\beta),
\end{align*}
% and

% \begin{align*}
% \partial_\beta \frac{\beta}{1+\exp(-\frac{d}{d-1}t(\beta))}=\frac{1}{1+\exp(-\frac{d}{d-1}t(\beta)}+\frac{\beta \exp(-\frac{d}{d-1}t(\beta)) \frac{d}{d-1}t(\beta) t'(\beta)}{(1+\exp(-\frac{d}{d-1}t(\beta)))^2}.
% \end{align*}
Note that
$H_2'(x)=-\log\frac{x}{1-x}$. Thus
\[H_2(t_v(\beta))'=-\log\frac{t_v}{1-t_v} t_v'(\beta)\] 

Differentiating, we get
\begin{align*}
\partial_\beta &\Phi(\beta,t(\beta))\\&=\frac{d}{k}\frac{-\sum_{i=0}^k {{k}\choose{i}} h_i\exp(-\beta h_i+it(\beta))}{\sum_{i=0}^k {{k}\choose{i}} \exp(-\beta h_i+it(\beta))}+t_v'(\beta)((d-1)\log\frac{t_v(\beta)}{1-t_v(\beta)}-dt(\beta))\\&=\frac{d}{k}\frac{-\sum_{i=0}^k {{k}\choose{i}} h_i\exp(-\beta h_i+it(\beta))}{\sum_{i=0}^k {{k}\choose{i}} \exp(-\beta h_i+it(\beta))}.
%&- \frac{d}{k} \frac{t'(\beta)\sum_{i=0}^k {{k}\choose{i}} i h_i\exp(-\beta h_i+it(\beta))}{\sum_{i=0}^k {{k}\choose{i}} \exp(-\beta h_i+it(\beta))}\\& +(d-1)|V| (\frac{\frac{d}{d-1}t(\beta) \exp(\frac{d}{d-1}t(\beta))}{1+\exp(\frac{d-1}{d}t(\beta))}-\log(1+\exp(\frac{d}{d-1}t(\beta)))). 
% \\&=\frac{d}{k}\frac{\sum_{i=0}^k {{k}\choose{i}} -h_i\exp(-\beta h_i+it(\beta))}{\sum_{i=0}^k {{k}\choose{i}} \exp(-\beta h_i+it(\beta))}\\&+t'(\beta)d\left(-\frac{1}{d-1}t\exp(\frac{d}{d-1}t(\beta))-\exp(\frac{d}{d-1}t(\beta))+t )
\end{align*}

From this, one can easily obtain the following theorem
\begin{lemma}\label{interpol}
Assume that for each $\beta$, we have a solution $t(\beta)$ of Equation \eqref{BPfixedpointEq}. Furthermore, assume that $t(\beta)$ is continuous and differentiable at all but finitely many points. Then for any $\beta_1\ge 0$, we have
\[\Phi(\beta_1,t(\beta_1))-\Phi(0,t(0))=\frac{d}{k}\int_0^{\beta_1} \frac{-\sum_{i=0}^k {{k}\choose{i}} h_i\exp(-\beta h_i+it(\beta))}{\sum_{i=0}^k {{k}\choose{i}} \exp(-\beta h_i+it(\beta))} d\beta.\]
\end{lemma}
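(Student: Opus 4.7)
The plan is to apply the fundamental theorem of calculus to the function $\beta \mapsto \Phi(\beta,t(\beta))$, using the derivative formula derived in the display immediately preceding the lemma. The key observation from that computation is that, because $t(\beta)$ satisfies the BP fixed point equation \eqref{BPfixedpointEq}, the $t'(\beta)$-term drops out: the identity $(d-1)\log\frac{t_v}{1-t_v} = dt$ (which is just the definition of $t_v$ rewritten) forces the coefficient of $t_v'(\beta)$ to vanish. Consequently, at each $\beta$ where $t$ is differentiable, $\partial_\beta \Phi(\beta,t(\beta))$ depends only on $\beta$ and $t(\beta)$, not on $t'(\beta)$, and equals the integrand on the right-hand side of the claimed identity.

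Next I would observe that this integrand is a continuous, bounded function of $\beta$ on $[0,\beta_1]$. Continuity follows from the assumed continuity of $t(\beta)$ together with the fact that the integrand is a ratio of finite sums of exponentials in $\beta$ and $t(\beta)$ with strictly positive denominator. Boundedness is immediate since $|h_i|$ is bounded. In particular the integral on the right is well-defined as a Riemann integral.

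To conclude, let $0 \le c_1 < c_2 < \dots < c_m \le \beta_1$ enumerate the finitely many exceptional points at which $t$ may fail to be differentiable, and set $c_0 = 0$, $c_{m+1} = \beta_1$. On each open subinterval $(c_j, c_{j+1})$, the composition $\beta \mapsto \Phi(\beta, t(\beta))$ is differentiable with derivative equal to the integrand, which extends continuously to the closed interval $[c_j, c_{j+1}]$. By smoothness of $\Phi$ in both variables and continuity of $t$, the composition itself is continuous on $[0,\beta_1]$. The classical form of the fundamental theorem of calculus (for a function continuous on a closed interval and $C^1$ on the interior with continuous derivative extending to the closure) then yields
\[
\Phi(c_{j+1},t(c_{j+1}))-\Phi(c_j,t(c_j)) = \frac{d}{k}\int_{c_j}^{c_{j+1}} \frac{-\sum_{i=0}^k {{k}\choose{i}} h_i\exp(-\beta h_i+it(\beta))}{\sum_{i=0}^k {{k}\choose{i}} \exp(-\beta h_i+it(\beta))}\, d\beta
\]
for each $j$. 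Summing over $j=0,\dots,m$ telescopes the left side to $\Phi(\beta_1,t(\beta_1))-\Phi(0,t(0))$ and gives the claim.

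The main (and essentially only) subtle point is the piecewise FTC step: one must verify that even though $t$ may be non-differentiable at the points $c_j$, the composition $\Phi(\beta,t(\beta))$ is still continuous there, so that the telescoping is valid. Once this continuity is noted, the rest of the argument is routine.
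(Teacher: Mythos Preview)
Your proposal is correct and follows exactly the approach indicated by the paper: the paper computes $\partial_\beta \Phi(\beta,t(\beta))$ in the display immediately preceding the lemma and then simply states that ``from this, one can easily obtain'' the result, leaving the integration step implicit. Your argument makes this step explicit by partitioning $[0,\beta_1]$ at the finitely many non-differentiability points of $t$, applying the fundamental theorem of calculus on each subinterval, and telescoping; this is precisely the routine justification the paper elides.
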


\subsection{Correlation inequalities}

\begin{lemma}\label{FKG}
Let $X$ be a finite set, and let $\nu:X\to\mathbb{R}_{\ge 0}$ and $f,g:X\to \mathbb{R}$. Assume that for any $x,y\in X$, we have
\[f(x)\le f(y) \Rightarrow g(x)\le g(y).\]
Then
\[\left(\sum_{x\in X} \nu(x)f(x)\right)\left(\sum_{x\in X} \nu(x)g(x)\right)\le \left(\sum_{x\in X} \nu(x)\right)\left(\sum_{x\in X} \nu(x)f(x)g(x)\right).\]
Also, if  for any $x,y\in X$, we have
\[f(x)\le f(y) \Rightarrow g(x)\ge g(y).\]
Then
\[\left(\sum_{x\in X} \nu(x)f(x)\right)\left(\sum_{x\in X} \nu(x)g(x)\right)\ge \left(\sum_{x\in X} \nu(x)\right)\left(\sum_{x\in X} \nu(x)f(x)g(x)\right).\]
\end{lemma}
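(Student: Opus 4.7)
The plan is to use the standard symmetrization identity that turns the difference of the two sides into a manifestly non-negative double sum. Introduce the abbreviations $N=\sum_{x\in X}\nu(x)$, $F=\sum_{x\in X}\nu(x)f(x)$, $G=\sum_{x\in X}\nu(x)g(x)$, and $P=\sum_{x\in X}\nu(x)f(x)g(x)$. First I would expand
\[
\sum_{x,y\in X}\nu(x)\nu(y)(f(x)-f(y))(g(x)-g(y))
\]
and verify by a direct computation that it equals $2(NP-FG)$. This reduces the inequality $FG\le NP$ to showing that each summand on the left is non-negative.

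Next I would check the sign of $(f(x)-f(y))(g(x)-g(y))$ using the hypothesis. If $f(x)\le f(y)$ then the hypothesis gives $g(x)\le g(y)$, so both factors are $\le 0$ and the product is $\ge 0$. If instead $f(x)>f(y)$, apply the hypothesis with the roles of $x$ and $y$ swapped (so $f(y)\le f(x)$) to conclude $g(y)\le g(x)$; now both factors are $\ge 0$, and again the product is $\ge 0$. Since $\nu\ge 0$, every term in the double sum is $\ge 0$, which yields $NP\ge FG$.

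For the second (reversed-monotonicity) statement I would simply apply the first inequality to the pair $(f,-g)$: the hypothesis $f(x)\le f(y)\Rightarrow g(x)\ge g(y)$ becomes $f(x)\le f(y)\Rightarrow -g(x)\le -g(y)$, and feeding $(f,-g)$ into the first inequality and negating the result gives the claimed reversed bound.

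There is no real obstacle here; the only point that deserves a line of care is the case analysis on the sign of $f(x)-f(y)$, since the hypothesis is phrased as a one-sided implication, and one has to note that it implies the symmetric statement by swapping the roles of $x$ and $y$. Everything else is bookkeeping with a non-negative weight $\nu$.
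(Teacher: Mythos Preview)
Your argument is correct. The symmetrization identity
\[
\sum_{x,y\in X}\nu(x)\nu(y)\bigl(f(x)-f(y)\bigr)\bigl(g(x)-g(y)\bigr)=2(NP-FG)
\]
is standard and your case analysis on the sign of $f(x)-f(y)$ is handled correctly, including the observation that the one-sided implication in the hypothesis becomes two-sided after swapping $x$ and $y$. The reduction of the second statement to the first via $g\mapsto -g$ is also fine.

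The paper takes a different route: it chooses a total order on $X$ compatible with $f$ (so that $x\le y$ implies $f(x)\le f(y)$, and then the hypothesis forces $g(x)\le g(y)$ as well), notes that a chain is trivially a distributive lattice, and invokes the FKG inequality as a black box. Your approach is more elementary and fully self-contained---it is essentially Chebyshev's sum inequality---while the paper's is shorter but imports a much heavier theorem for what is ultimately a one-line computation. Either is perfectly acceptable here; yours has the advantage of not requiring the reader to unpack why FKG applies.
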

\begin{proof}
Consider a total ordering $\le$ of $X$ such  that $x\le y$ implies $f(x)\le f(y)$. Then, since every totally ordered set forms a distributive lattice, we can apply the FKG inequality \cite{fortuin1971correlation}, to get the statement. 
\end{proof}
We also need the following consequence.
\begin{lemma}\label{FKG2}
Let $X$ be a finite set, and let $\nu,f,g:X\to\mathbb{R}_{\ge 0}$ and $K:X\to \mathbb{R}$. Assume that for any $x,y\in X$, we have
\[K(x)\le K(y) \Rightarrow \frac{f(x)}{g(x)}\le \frac{f(y)}{g(y)}.\]
Then
\[\left(\sum_{x\in X} \nu(x)f(x)\right)\left(\sum_{x\in X} \nu(x)g(x) K(x)\right)\le \left(\sum_{x\in X} \nu(x)g(x)\right)\left(\sum_{x\in X} \nu(x)f(x)K(x)\right).\]
Also, if for any $x,y\in X$, we have
\[K(x)\le K(y) \Rightarrow \frac{f(x)}{g(x)}\ge \frac{f(y)}{g(y)}.\]
Then
\[\left(\sum_{x\in X} \nu(x)f(x)\right)\left(\sum_{x\in X} \nu(x)g(x) K(x)\right)\ge \left(\sum_{x\in X} \nu(x)g(x)\right)\left(\sum_{x\in X} \nu(x)f(x)K(x)\right).\]
\end{lemma}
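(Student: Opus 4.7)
The plan is to deduce Lemma \ref{FKG2} from the already-proved Lemma \ref{FKG} by absorbing the factor $g$ into the weight: replace the nonnegative weight $\nu(x)$ by
\[\tilde\nu(x):=\nu(x)g(x),\]
and view $K$ and $f/g$ as the two functions whose correlation is to be controlled. Under this substitution, the hypothesis of Lemma \ref{FKG2} becomes exactly the hypothesis of Lemma \ref{FKG} for these two functions with the new weight.

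Concretely, assuming first that $g(x)>0$ for every $x$, each of the four sums in the statement rewrites in terms of $\tilde\nu$:
\begin{align*}
\sum_x \nu(x) f(x) &= \sum_x \tilde\nu(x)\,(f/g)(x),\\
\sum_x \nu(x) g(x) K(x) &= \sum_x \tilde\nu(x)\,K(x),\\
\sum_x \nu(x) g(x) &= \sum_x \tilde\nu(x),\\
\sum_x \nu(x) f(x) K(x) &= \sum_x \tilde\nu(x)\,(f/g)(x)\,K(x).
\end{align*}
The assumption $K(x)\le K(y)\Rightarrow (f/g)(x)\le (f/g)(y)$ is precisely the hypothesis of Lemma \ref{FKG} for the weight $\tilde\nu$ and the functions $K$ and $f/g$; that lemma therefore yields
\[\Bigl(\sum_x \tilde\nu(x) K(x)\Bigr)\Bigl(\sum_x \tilde\nu(x) (f/g)(x)\Bigr)\le\Bigl(\sum_x \tilde\nu(x)\Bigr)\Bigl(\sum_x \tilde\nu(x) K(x)(f/g)(x)\Bigr),\]
which on substituting back $\tilde\nu=\nu g$ becomes the desired inequality. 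The reverse inequality follows in exactly the same way from the second half of Lemma \ref{FKG}.

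The only real technical point I expect is the treatment of those $x$ with $g(x)=0$, for which $f(x)/g(x)$ is undefined. This should be minor: one may reinterpret the hypothesis in the division-free form ``$f(x)g(y)\le g(x)f(y)$ whenever $K(x)\le K(y)$'' and avoid ratios entirely, or perturb $g$ to $g+\varepsilon$, run the previous argument, and let $\varepsilon\to 0^+$. As a sanity check, the symmetrization identity
\[2\Bigl[\Bigl(\sum_x \nu(x) f(x)\Bigr)\Bigl(\sum_x \nu(x) g(x) K(x)\Bigr)-\Bigl(\sum_x \nu(x) g(x)\Bigr)\Bigl(\sum_x \nu(x) f(x) K(x)\Bigr)\Bigr]\]
\[=\sum_{x,y}\nu(x)\nu(y)\bigl(K(y)-K(x)\bigr)\bigl(f(x)g(y)-g(x)f(y)\bigr)\]
makes the inequality transparent --- each summand is nonpositive under the division-free hypothesis --- and bypasses the $g=0$ issue entirely.
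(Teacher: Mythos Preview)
Your proof is correct and follows exactly the paper's approach: the paper's entire proof is the one-line instruction ``Apply the previous lemma for $\nu\cdot g$, $\frac{f}{g}$, $K$,'' which is precisely your substitution $\tilde\nu=\nu g$. Your additional remarks on the $g(x)=0$ case and the symmetrization identity go beyond what the paper provides.
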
 
\begin{proof}
Apply the previous lemma for $\nu\cdot g$, $\frac{f}{g}$, $K$.
\end{proof}

For $v,w\in \{0,1\}^k$, we define $v\vee w,v\wedge w\in \{0,1\}^k$ as their component wise maximum and minimum, respectively. A function $f:\{0,1\}^k\to \mathbb{R}_{\ge 0}$ is called log-supermodular, if for all $v,w\in \{0,1\}^k$, we have
\[f(v)f(w)\le f(v\vee w)f(v\wedge w).\]

We will need the following consequence of the so called Four Function Theorem \cite{ahlswede1978inequality}.
%For an integer $n$, let $[n]=\{0,1,2,\dots,n\}$. 

\begin{lemma}\label{lsmmarg}
Let $f:\{0,1\}^k\to \mathbb{R}_{\ge 0}$ be a log-supermodular function. Then $f$ has log-supermodular marginals. In other words, let $\ell<k$, and define $g:\{0,1\}^\ell\to \mathbb{R}_{\ge 0}$ as
\[g(v_1,v_2,\dots,v_\ell)=\sum_{(v_{\ell+1},v_{\ell+2},\dots,v_k)\in \{0,1\}^{k-\ell}} f(v_1,v_2,\dots,v_k).\]
Then $g$ is log-supermodular.
\end{lemma}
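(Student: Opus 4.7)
The plan is to reduce to the case $\ell = k-1$ by induction on $k-\ell$, and then invoke the Four Function Theorem of Ahlswede and Daykin (the reference already cited in the paper) directly.

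For the inductive step, suppose we have shown the statement when exactly one coordinate is summed out. Given $\ell < k-1$, let $g':\{0,1\}^{k-1}\to \mathbb{R}_{\ge 0}$ be obtained from $f$ by summing out the last coordinate. The one-step case gives that $g'$ is log-supermodular, and then the induction hypothesis applied to $g'$ yields that $g$ (obtained by summing out the last $k-\ell-1$ coordinates of $g'$) is log-supermodular. So the entire task reduces to the case $\ell=k-1$.

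For the case $\ell=k-1$, fix $v,w\in \{0,1\}^{k-1}$ and consider the sets
\[A=\{(v,0),(v,1)\}\quad\text{and}\quad B=\{(w,0),(w,1)\}\]
in the distributive lattice $\{0,1\}^k$ with coordinate-wise join and meet. A direct check shows
\[A\vee B=\{(v\vee w,0),(v\vee w,1)\}\quad\text{and}\quad A\wedge B=\{(v\wedge w,0),(v\wedge w,1)\},\]
so for the set-function $f(S)=\sum_{x\in S} f(x)$ we get $f(A)=g(v)$, $f(B)=g(w)$, $f(A\vee B)=g(v\vee w)$, and $f(A\wedge B)=g(v\wedge w)$. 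Now apply the Four Function Theorem with all four functions taken to be $f$: the required pointwise hypothesis $f(x)f(y)\le f(x\vee y)f(x\wedge y)$ for $x,y\in\{0,1\}^k$ is exactly the assumed log-supermodularity of $f$. The conclusion of the theorem is
\[f(A)f(B)\le f(A\vee B)f(A\wedge B),\]
which is precisely $g(v)g(w)\le g(v\vee w)g(v\wedge w)$, as desired.

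There is not really a hard step here: the only thing to get right is the choice of the two-element sets $A$ and $B$ so that the lattice operations on subsets collapse to the operations on the first $k-1$ coordinates, after which the Four Function Theorem does all the work. The induction then bootstraps the one-coordinate case to arbitrary $\ell$.
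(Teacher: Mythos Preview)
Your proof is correct and follows exactly the route the paper indicates: the paper does not give a detailed proof but simply states that the lemma is a consequence of the Four Function Theorem of Ahlswede and Daykin, and your argument spells out precisely how that deduction goes. The choice of $A=\{(v,0),(v,1)\}$ and $B=\{(w,0),(w,1)\}$ is the standard one, your computation of $A\vee B$ and $A\wedge B$ is right, and the induction on $k-\ell$ is the obvious reduction.
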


\section{The proof of Lemma \ref{lemmafix}}\label{sec3}
% Let $d,k\ge 2$, and let $h_0,h_1,\dots,h_k$ be a symmetric convex sequence, that is,
% \begin{align*}
% a_i&=a_{k-i} &\forall 0\le i\le k\\
% 2a_i&\le a_{i-1}+a_{i+1} &\forall 1\le i\le k-1.
% \end{align*}
For $v=(v_1,v_2,\dots,v_k)\in \{0,1\}^k$, we define
\[|v|=\sum_{i=1}^k v_i,\]
and 
\[H(v)=h_{|v|}.\]
For $v=(v_1,v_2,\dots,v_{k-1})\in \{0,1\}^{k-1}$, the vectors $(v_1,v_2,\dots,v_{k-1},0)$ and $(v_1,v_2,\dots,v_{k-1},1)$ are denoted by $v0$ and $v1$, respectively.

Observe that
\begin{align*}
F(\beta,t)&=-t+(d-1)\log\left(\frac{\sum_{i=0}^{k-1} {{k-1}\choose{i}} \exp(-\beta h_{i+1}+it)}{\sum_{i=0}^{k-1} {{k-1}\choose{i}} \exp(-\beta h_{i}+it)}\right)
\\&=-t+(d-1)\log \left(\frac{\sum_{v\in \{0,1\}^{k-1}} \exp\left(-\beta H(v1)+|v|t\right)}{\sum_{v\in \{0,1\}^{k-1}} \exp\left(-\beta H(v0)+|v|t\right)}\right).%\\
%&=-t+(d-1)\log\left(\frac{\sum_{i=0}^{k-1} {{k-1}\choose{i}} \exp(-\beta h_{i+1}+it)}{\sum_{i=0}^{k-1} {{k-1}\choose{i}} \exp(-\beta h_{i}+it)}\right).
\end{align*}

By the symmetry of the sequence $h_0,h_1,\dots, h_k$, we have  
\[F(\beta,-t)=-F(\beta,t).\]

% \begin{lemma}\label{lemmafix}
% There is a $\beta_c\in [0,\infty]$, and a continuous monotone increasing function $g:(\beta_c,\infty)\to \mathbb{R}_{>0}$ with $\lim_{\beta \to \beta_c} g(\beta)=0$, such that

% \begin{align*}
% \{(\beta,t)\in \mathbb{R}_{\ge 0}\times \mathbb{R}\quad |\quad &F(\beta,t)=0\}=\mathbb{R}_{\ge 0}\times \{0\}\\ 
% &\cup\{(\beta,g(\beta))\quad|\quad \beta\in(\beta_c,\infty)\}\\
% &\cup\{(\beta,-g(\beta))\quad|\quad \beta\in(\beta_c,\infty)\}.
% \end{align*}
% \end{lemma}
% \subsection{Proof of Lemma \ref{lemmafix}}

From the observation that $F(\beta,-t)=-F(\beta,t)$, it is straightforward to see that $F(\beta,0)=0$ for any $\beta$. It also shows that we can restrict our attention to the case $t>0$. 

\begin{lemma}\label{lemmaconcave}
For any fixed $\beta\ge 0$, the function $t\mapsto F(\beta,t)$ is concave on $\mathbb{R}_{\ge 0}$. 
\end{lemma}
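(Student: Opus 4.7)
My plan is to exploit the symmetry $h_i = h_{k-i}$ to collapse $F(\beta,\cdot)$ into a single cumulant generating function, reducing the claimed concavity to a variance comparison.

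\emph{Step 1 (Symmetry reduction).} Set
\[A(t) = \sum_{i=0}^{k-1}\binom{k-1}{i}e^{-\beta h_{i+1}+it}, \qquad B(t) = \sum_{i=0}^{k-1}\binom{k-1}{i}e^{-\beta h_i + it},\]
so $F(\beta,t) = -t + (d-1)\log(A(t)/B(t))$. The substitution $i\mapsto k-1-i$ together with $h_{k-i}=h_i$ gives $A(t) = e^{(k-1)t}B(-t)$. Writing $L := \log B$, one obtains
\[F(\beta,t) \;=\; \bigl((d-1)(k-1)-1\bigr)t \;+\; (d-1)\bigl(L(-t)-L(t)\bigr),\]
and hence $\partial_t^2 F(\beta,t) = (d-1)\bigl(L''(-t)-L''(t)\bigr)$. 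Since $L$ is the log-moment generating function of $I$ under the probability measure $\mu_s(i) \propto \binom{k-1}{i}e^{-\beta h_i + is}$ on $\{0,\ldots,k-1\}$, one has $L''(s) = \mathrm{Var}_{\mu_s}(I)$. Concavity of $F(\beta,\cdot)$ on $\mathbb{R}_{\ge 0}$ is therefore equivalent to
\[\mathrm{Var}_{\mu_t}(I) \;\ge\; \mathrm{Var}_{\mu_{-t}}(I) \qquad \text{for every } t\ge 0,\]
with equality at $t=0$ automatic.

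\emph{Step 2 (Variance inequality).} Multiplying by $B(t)^2 B(-t)^2$, expanding each variance as $\tfrac12\mathbb{E}[(I-I')^2]$ over i.i.d.\ copies, and grouping pairs $(i,j)$ according to the total $u=i+j$, the required inequality becomes
\[\sum_{u > v}\bigl[S_2(u)S_0(v) - S_2(v)S_0(u)\bigr]\sinh\bigl((u-v)t\bigr) \;\ge\; 0,\]
where $S_n(u) := \sum_{i+j=u}(i-j)^n \binom{k-1}{i}\binom{k-1}{j}e^{-\beta(h_i+h_j)}$. For $t\ge 0$ each $\sinh((u-v)t)$ with $u>v$ is nonnegative, so the inequality would follow from monotonicity of $u\mapsto S_2(u)/S_0(u)$; failing that, I would apply Lemma~\ref{FKG2} on an enlarged index set, using concavity of $h$ (equivalently, the fact that $i\mapsto h_{i+1}-h_i$ is non-increasing) to verify the monotonicity of the appropriate likelihood ratio and thereby extract a favourable global cancellation.

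\emph{Main obstacle.} Step 2 is the technical crux. The ratio $S_2(u)/S_0(u)$ equals $4\,\mathrm{Var}_{\mathbb P_u}(i)$ for the conditional distribution $\mathbb P_u$ on pairs with $i+j=u$, and it vanishes at both endpoints $u=0$ and $u=2(k-1)$ while being positive in between. Hence individual summands do not all share a sign and no term-by-term bound is available; the inequality must arise from a global cancellation engineered by both the concavity of $h$ (which controls how $e^{-\beta h_i}$ interacts with the binomial weights) and the reflection symmetry $h_i=h_{k-i}$ (which enabled Step 1 in the first place). Identifying the correct FKG--configuration -- the right choices of $f$, $g$, $K$ in Lemma~\ref{FKG2} -- is the delicate point, and if the direct application does not close the argument, a fallback is to differentiate the difference in $t$ and integrate from the equality case at $t=0$, reducing the claim to an inequality about third cumulants of $\mu_t$ that is accessible via GHS--type correlation estimates for the log-supermodular measure $\mu_s$.
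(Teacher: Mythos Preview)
Your Step~1 is a clean and correct reduction: the identity $A(t)=e^{(k-1)t}B(-t)$ follows from $h_{k-j}=h_j$, and it gives $\partial_t^2 F(\beta,t)=(d-1)\bigl(L''(-t)-L''(t)\bigr)$, so concavity on $\mathbb{R}_{\ge 0}$ is equivalent to the variance comparison $\mathrm{Var}_{\mu_t}(I)\ge\mathrm{Var}_{\mu_{-t}}(I)$ for $t\ge 0$. The paper does not take this route, so this is a genuinely different (and attractive) reformulation.

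The gap is Step~2. You correctly note that $u\mapsto S_2(u)/S_0(u)$ is \emph{not} monotone (it vanishes at both endpoints $u=0$ and $u=2(k-1)$), so the term-by-term argument fails; from that point on you only offer programmatic remarks: ``identifying the correct FKG-configuration \dots\ is the delicate point'' and a fallback to ``GHS-type correlation estimates''. Neither is carried out. In particular, Lemma~\ref{FKG2} requires a monotone likelihood ratio, and the ratio you have in hand is unimodal, so there is no evident choice of $(\nu,f,g,K)$ on your index set $\{0,\dots,2(k-1)\}$ that closes the inequality. You also have not said concretely where concavity of $h$ enters in your pairing by $u=i+j$. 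As written, the proposal reformulates the problem but does not prove it.

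For contrast, the paper's argument is structurally different and avoids the variance comparison altogether. It lifts $F$ to the symmetric multivariable function $F_2(t_1,\dots,t_{k-1})$ and computes $\partial_{t_{k-1}}^2 F_2$ explicitly as a quotient whose sign is controlled by a product of two factors, $(BC-AD)$ and $\bigl(AC\,e^{2t_{k-1}}-BD\bigr)$, where $A,B,C,D$ are partial sums over $\{0,1\}^{k-2}$. The first factor is $\le 0$ because $v\mapsto \exp(-\beta H(v)+\sum v_it_i)$ is log-supermodular (this is precisely where concavity of $h$ enters, via Lemma~\ref{lsmmarg}); the second factor is $\ge 0$ on $\mathbb{R}_{\ge 0}^{k-1}$ thanks to $B=C$ and a sub-lemma $A\ge D$ that uses the symmetry $h_i=h_{k-i}$ together with another log-supermodularity step. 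This explicit factorisation into two signed pieces is the mechanism that replaces the ``global cancellation'' you are searching for; if you want to push your variance approach through, you would need an analogue of that factorisation for $L''(t)-L''(-t)$, and none is visible in your current outline.
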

\begin{proof}
Let us introduce the function 
\begin{align*}
F_2(t_1,t_2,\dots, t_{k-1})=-\frac{1}{k-1}\sum_{i=1}^{k-1} t_i&+(d-1)\log\left(\sum_{v\in \{0,1\}^{k-1}} \exp\left(-\beta H(v1)+\sum_{i=1}^{k-1}v_i t_i\right)\right)\\&-(d-1)\log\left(\sum_{v\in \{0,1\}^{k-1}} \exp\left(-\beta H(v0)+\sum_{i=1}^{k-1}v_i t_i\right)\right)
\end{align*}

We claim that if $t_1=t_2=\dots=t_{k-1}=t$, then $\partial_{t_{k-1}}^2 F_2(t_1,t_2,\dots,t_{k-1})\le 0$. First, let us define
\begin{align*}
A&=A(t_1,t_2,\dots,t_{k-2})=\sum_{v\in \{0,1\}^{k-2}} \exp\left(-\beta H(v11)+\sum_{i=1}^{k-2} v_i t_i\right)\\
B&=B(t_1,t_2,\dots,t_{k-2})=\sum_{v\in \{0,1\}^{k-2}} \exp\left(-\beta H(v01)+\sum_{i=1}^{k-2} v_i t_i\right)\\
C&=C(t_1,t_2,\dots,t_{k-2})=\sum_{v\in \{0,1\}^{k-2}} \exp\left(-\beta H(v10)+\sum_{i=1}^{k-2} v_i t_i\right)\\
D&=D(t_1,t_2,\dots,t_{k-2})=\sum_{v\in \{0,1\}^{k-2}} \exp\left(-\beta H(v00)+\sum_{i=1}^{k-2} v_i t_i\right)
\end{align*}

It is straightforward from the definition of $H$ that we have $B=C$. 

With these notations, we have
\[F_2(t_1,t_2,\dots, t_{k-1})=-\frac{1}{k-1}\sum_{i=1}^{k-1} t_i+(d-1)\log\left(A\exp(t_{k-1})+B\right)-(d-1)\log\left(C\exp(t_{k-1})+D\right).\]  

Thus, a direct computation gives us that
\[\partial_{t_{k-1}}^2 F_2(t_1,t_2,\dots,t_{k-1})=\frac{(d-1)\exp(t_{k-1})(BC-AD)(AC\exp (2t_{k-1})-BD)}{\left(A\exp(t_{k-1})+B\right)^2 \left(C\exp(t_{k-1})+D\right)^2}.\]

We define the function $f$ on $\{0,1\}^k$ as
\[f(v)=\exp\left(-\beta H(v)+\sum_{i=1}^{k-2} v_i t_i\right).\]
Since $f$ is logsupermodular, it has logsupermodular marginals by Lemma \ref{lsmmarg}, in particular,
\[BC\le AD.\]

\begin{lemma}
If $(t_1,t_2,\dots,t_{k-2})\in \mathbb{R}_{\ge 0}^{k-2}$, then 
\[A\ge D.\]
\end{lemma}
\begin{proof}
Let us consider the function
\[R(t_1,t_2,\dots,t_{k-2})=\log A(t_1,t_2,\dots,t_{k-2})-\log D(t_1,t_2,\dots,t_{k-2}).\]
From the symmetry of the sequence $h_0,h_1,\dots,h_k$, we see that $R(0)=0$. Thus, the statement will follow, once we prove that all the partial derivatives of $R$ are non-negative on $\mathbb{R}_{\ge 0}^{k-1}$. For simplicity of notation, we will show this for $\partial_{t_{k-2}} R$.

First, let us define
\begin{align*}
A^\sharp &=A^\sharp(t_1,t_2,\dots,t_{k-2})=\sum_{v\in \{0,1\}^{k-3}} \exp\left(-\beta H(v111)+\sum_{i=1}^{k-3} v_i t_i\right)\\
B^\sharp &=B^\sharp(t_1,t_2,\dots,t_{k-2})=\sum_{v\in \{0,1\}^{k-3}} \exp\left(-\beta H(v011)+\sum_{i=1}^{k-3} v_i t_i\right)\\
C^\sharp &=C^\sharp(t_1,t_2,\dots,t_{k-2})=\sum_{v\in \{0,1\}^{k-3}} \exp\left(-\beta H(v100)+\sum_{i=1}^{k-3} v_i t_i\right)\\
D^\sharp &=D^\sharp(t_1,t_2,\dots,t_{k-2})=\sum_{v\in \{0,1\}^{k-3}} \exp\left(-\beta H(v000)+\sum_{i=1}^{k-3} v_i t_i\right)
\end{align*}

Then, we have
\[R=\log\left(A^\sharp\exp(t_{k-2})+B^\sharp\right)-\log\left(C^\sharp\exp(t_{k-2})+D^\sharp\right),\]
and 
\[\partial_{t_{k-2}} R=\frac{\exp(t_{k-2})(A^\sharp D^\sharp-B^\sharp C^\sharp)}{(A^\sharp\exp(t_{k-2})+B^\sharp)(C^\sharp\exp(t_{k-2})+D^\sharp)}\ge 0,\]
since the inequality $A^\sharp D^\sharp \ge B^\sharp C^\sharp$ can be proved like above.
\end{proof}

Combining the lemma above with the facts $B=C$ and $t_{k-2}>0$, we have
\[AC\exp(2t_{k-1})-BD\ge DC-BD=0.\]

Therefore, 
\[\partial_{t_{k-1}}^2 F_2(t_1,t_2,\dots,t_{k-1})=\frac{(d-1)\exp(t_{k-1})(BC-AD)(AC\exp(2t_{k-1})-BD))}{\left(A\exp(t_{k-1})+B\right)^2 \left(C\exp(t_{k-1})+D\right)^2}\le 0.\]

By symmetry, we have $\partial_{t_{i}}^2 F_2(t_1,t_2,\dots,t_{k-1})\le 0$ for any $1\le i\le k-1$. In particular, $\partial_{t_{i}} F_2(t_1,t_2,\dots,t_{k-1}) $ monotone decreasing. By the chain rule, we have
\[\partial_t F(\beta,t)=\sum_{i=1}^{k-1} \partial_{t_i} F_2(t,t,\dots,t)\]
is monotone decreasing. This concludes the proof of Lemma \ref{lemmaconcave}.% so the statement follows.
\end{proof}

\begin{lemma}\label{betacexists}
For any fixed $\beta\ge 0$, the function $t\mapsto F(\beta,t)$ has a unique positive zero, if and only if $\partial_t F(\beta,0)>0$. If $\partial_t F(\beta,0)\le 0$, then   $t\mapsto F(\beta,t)$ has no positive zero.
\end{lemma}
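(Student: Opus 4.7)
The plan is to combine the concavity of $t\mapsto F(\beta,t)$ on $\mathbb{R}_{\ge 0}$ given by Lemma~\ref{lemmaconcave} with the value $F(\beta,0)=0$ already observed and a computation of the limit at $+\infty$. Specifically, in the log-ratio both the numerator and the denominator are dominated by their $i=k-1$ summand as $t\to\infty$, so the ratio tends to $\exp(-\beta(h_k-h_{k-1}))$, which is a finite constant, and hence the $-t$ prefactor forces
\[
\lim_{t\to\infty} F(\beta,t)=-\infty.
\]

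\emph{Case $\partial_t F(\beta,0)>0$.} By continuity $F(\beta,t)>0$ for small $t>0$, and since $F(\beta,t)\to -\infty$, the intermediate value theorem produces at least one positive zero. For uniqueness, suppose $0<t_1<t_2$ were two positive zeros; together with the zero at $0$ this gives three zeros of the concave function $F(\beta,\cdot)$ on $[0,t_2]$. The minimum of a concave function on an interval is attained at an endpoint, so $F\ge 0$ on $[0,t_2]$; applying the concavity inequality to express $t_1$ as a convex combination of $0$ and $t_2$ (and, for $s\in(0,t_2)$, $t_1$ as a convex combination of $s$ and the other endpoint) pins $F$ to $0$ throughout $[0,t_2]$. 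This forces $\partial_t F(\beta,0)=0$, contradicting the assumption.

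\emph{Case $\partial_t F(\beta,0)\le 0$.} By concavity the tangent at $0$ majorizes $F$, giving
\[
F(\beta,t)\le t\cdot \partial_t F(\beta,0)\le 0 \qquad \text{for every } t\ge 0.
\]
If some $t_0>0$ satisfied $F(\beta,t_0)=0$, the same concavity argument as above forces $F(\beta,\cdot)\equiv 0$ on $[0,t_0]$. Since $F(\beta,\cdot)$ is real-analytic on $\mathbb{R}$ (it is $-t$ plus $(d-1)$ times the logarithm of a strictly positive ratio of entire functions of $t$), it would then vanish identically on $\mathbb{R}$, contradicting $F(\beta,t)\to-\infty$.

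\textbf{Main obstacle.} The genuinely delicate point is the borderline subcase $\partial_t F(\beta,0)=0$: pure concavity only yields $F\le 0$, not strict negativity, and one must exclude a hidden interval of zeros near the origin. Invoking real-analyticity of $F(\beta,\cdot)$ together with the $-\infty$ asymptote is the cleanest way to close this gap; an alternative would be to upgrade Lemma~\ref{lemmaconcave} to strict concavity by tracking when the inequality $BC\le AD$ appearing there is strict, but this would require a non-degeneracy hypothesis on $(h_i)$ and would be more cumbersome.
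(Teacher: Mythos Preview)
Your proof is correct and follows the same skeleton as the paper's (two-line) argument: combine $F(\beta,0)=0$, $\lim_{t\to\infty}F(\beta,t)=-\infty$, and the concavity of $t\mapsto F(\beta,t)$ from Lemma~\ref{lemmaconcave}. You are more careful than the paper in the borderline case $\partial_t F(\beta,0)=0$, where you invoke the real-analyticity of $t\mapsto F(\beta,t)$ to exclude a nontrivial interval of zeros; the paper simply asserts that the statement ``follows'' from concavity and the boundary behaviour and does not isolate this subcase.
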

\begin{proof}
Observe that $F(\beta,0)=0$ and
\[\lim_{t\to \infty} F(\beta,t)=-\infty.\]
The statement follows by combining these observation with Lemma \ref{lemmaconcave}.
\end{proof}

\begin{lemma}\label{difft}
Consider a $\beta\ge 0$ such that $\partial_t F(\beta,0)>0$. Let $t^+$ be the unique positive zero of $t\mapsto F(\beta,t)$. Then $\partial_t F(\beta,t^+)<0$.
\end{lemma}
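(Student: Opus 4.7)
The plan is to argue by contradiction, ruling out the two possibilities $\partial_t F(\beta,t^+)=0$ and $\partial_t F(\beta,t^+)>0$. The entire argument rests on the concavity of $t\mapsto F(\beta,t)$ established in Lemma \ref{lemmaconcave}, together with the boundary data $F(\beta,0)=0$, the standing hypothesis $\partial_t F(\beta,0)>0$, and the behavior $\lim_{t\to\infty}F(\beta,t)=-\infty$ noted in the proof of Lemma \ref{betacexists}.

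First, I would rule out $\partial_t F(\beta,t^+)=0$. If this equality held, then concavity would make $t^+$ a global maximum of $F(\beta,\cdot)$ on $\mathbb{R}_{\ge 0}$, so $F(\beta,t)\le F(\beta,t^+)=0$ for every $t\ge 0$. Combined with $F(\beta,0)=0$, this would mean $F(\beta,\cdot)$ attains its maximum at both $0$ and $t^+$. A concave function attaining its maximum at two distinct points is constant on the closed interval between them, so $F(\beta,\cdot)\equiv 0$ on $[0,t^+]$. This directly contradicts $\partial_t F(\beta,0)>0$.

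Next, I would rule out $\partial_t F(\beta,t^+)>0$. In that case, since $F(\beta,t^+)=0$ and $F(\beta,\cdot)$ is strictly increasing at $t^+$, there would exist $\varepsilon>0$ with $F(\beta,t^++\varepsilon)>0$. Because $F(\beta,t)\to-\infty$ as $t\to\infty$, the intermediate value theorem would then produce a further zero of $F(\beta,\cdot)$ in $(t^++\varepsilon,\infty)$, contradicting the uniqueness of $t^+$ as the positive zero established in Lemma \ref{betacexists}.

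I do not anticipate any genuine obstacle: all the analytic input (concavity on $\mathbb{R}_{\ge 0}$, uniqueness of the positive zero, and divergence to $-\infty$) is already available from the preceding lemmas, and the conclusion $\partial_t F(\beta,t^+)<0$ is just the standard transversality one expects when a strictly concave smooth function crosses zero from above at a point where it had been strictly positive nearby.
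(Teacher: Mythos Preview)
Your proof is correct and takes essentially the same approach as the paper: both arguments rest on the concavity from Lemma~\ref{lemmaconcave} together with $F(\beta,0)=0$, $\partial_t F(\beta,0)>0$, and $F(\beta,t)\to-\infty$. The paper argues directly---finding a local maximum $t_0\in(0,t^+)$ with $f'(t_0)=0$ and $f(t_0)>0$, then using that concavity forces $f'(t^+)<f'(t_0)=0$---whereas you split into the two cases $\partial_t F(\beta,t^+)=0$ and $\partial_t F(\beta,t^+)>0$; the content is the same.
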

\begin{proof}
Let $f(t)=F(\beta,t)$. Since $f(0)=f(t^+)=0$ and $f'(0)>0$, we have a local maximum point $t_0$ of $f$ in the interval $(0,t)$. Then $f'(t_0)=0$ and $f(t_0)>f(t^+)$. By Lemma \ref{lemmaconcave}, $f$ is concave on $[0,\infty)$. It follows that $f'(t^+)<0$. 
\end{proof}

\begin{lemma}\label{monotonbeta}
The function $\beta\mapsto \partial_t F(\beta,0)$ is monotone increasing.
\end{lemma}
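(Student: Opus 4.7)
Differentiating $F$ in $t$ directly at $t=0$ gives
\[\partial_t F(\beta,0)=-1+(d-1)\bigl(\mathbb{E}_{\mu_N(\beta)}[i]-\mathbb{E}_{\mu_D(\beta)}[i]\bigr),\]
where $\mu_D(\beta)(i)\propto \binom{k-1}{i}e^{-\beta h_i}$ and $\mu_N(\beta)(i)\propto \binom{k-1}{i}e^{-\beta h_{i+1}}$ are probability measures on $\{0,\dots,k-1\}$. The symmetry $h_{i+1}=h_{k-1-i}$ yields $\mu_N(\beta)(i)=\mu_D(\beta)(k-1-i)$, hence $\mathbb{E}_{\mu_N}[i]+\mathbb{E}_{\mu_D}[i]=k-1$, so
\[\partial_t F(\beta,0)=-1+(d-1)(k-1)-2(d-1)\mathbb{E}_{\mu_D(\beta)}[i].\]
Since $\tfrac{d}{d\beta}\mathbb{E}_{\mu_D(\beta)}[i]=-\mathrm{Cov}_{\mu_D(\beta)}(i,h_i)$, the lemma will follow once $\mathrm{Cov}_{\mu_D(\beta)}(i,h_i)\ge 0$ is established for every $\beta\ge 0$.

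To prove this covariance inequality I pass to the auxiliary probability measure $\mu^+(\beta)(i)\propto \binom{k}{i}e^{-\beta h_i}$ on $\{0,1,\dots,k\}$, which is symmetric about $k/2$; in particular $\mathbb{E}_{\mu^+(\beta)}[i]=k/2$ and, together with the symmetry of $h$, $\mathrm{Cov}_{\mu^+(\beta)}(i,h_i)=0$. Using the identity $\binom{k-1}{i}=\tfrac{k-i}{k}\binom{k}{i}$, the measure $\mu_D$ is exactly the tilt of $\mu^+$ by the weight $k-i$. Expanding $\mathbb{E}_{\mu_D}[ih_i]$ and $\mathbb{E}_{\mu_D}[i]\mathbb{E}_{\mu_D}[h_i]$ in terms of $\mu^+$-expectations and collecting terms using the two symmetry identities gives the clean equality
\[\mathrm{Cov}_{\mu_D(\beta)}(i,h_i)=-\frac{2}{k}\mathrm{Cov}_{\mu^+(\beta)}(i^2,h_i),\]
so it suffices to show $\mathrm{Cov}_{\mu^+(\beta)}(i^2,h_i)\le 0$.

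Using $\mathrm{Cov}_{\mu^+(\beta)}(i,h_i)=0$, I may replace $i^2$ by $(i-k/2)^2$ in the covariance without changing its value. Both $(i-k/2)^2$ and $h_i$ depend on $i$ only through $j:=|i-k/2|$; pushing $\mu^+(\beta)$ forward along $i\mapsto j$ gives a probability measure on a totally ordered set on which $j\mapsto j^2$ is strictly increasing while $j\mapsto h_{k/2+j}$ is non-increasing (by the concavity and symmetry of $h$, which force $h$ to be unimodal with its maximum at $k/2$). The reverse (second) assertion of Lemma \ref{FKG} applied to these two oppositely monotone functions then yields $\mathrm{Cov}_{\mu^+(\beta)}(i^2,h_i)\le 0$, completing the proof. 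The main obstacle is the symmetrization step that passes from $\mu_D$ on $\{0,\dots,k-1\}$ to $\mu^+$ on $\{0,\dots,k\}$: the added symmetry is precisely what allows the linear term to be eliminated and reduces the problem to two oppositely monotone functions of $|i-k/2|$, after which the reverse FKG inequality finishes the job.
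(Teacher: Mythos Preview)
Your proof is correct and follows essentially the same strategy as the paper: both reduce the claim to the inequality $\mathrm{Cov}_{\mu^+}\bigl((i-k/2)^2,\,h_i\bigr)\le 0$ for the symmetric measure $\mu^+(i)\propto\binom{k}{i}e^{-\beta h_i}$, and then finish via Lemma~\ref{FKG} using that $(i-k/2)^2$ and $h_i$ are oppositely monotone as functions of $|i-k/2|$. The only difference is the algebra leading there: the paper symmetrizes over $\{0,1\}^k$ to obtain directly $\partial_t F(\beta,0)=-1+\tfrac{2(d-1)}{k}\,\mathbb{E}_{\mu^+}\bigl[2(i-k/2)^2-\tfrac{k}{2}\bigr]$, whereas you first isolate $\mathbb{E}_{\mu_D}[i]$ and then pass to $\mu^+$ through the tilting identity $\mu_D(i)=\tfrac{2(k-i)}{k}\mu^+(i)$, arriving at the same covariance.
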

\begin{proof}

By the symmetry of the sequence $h_0,h_1,\dots,h_k$, we have
\[\sum_{v\in \{0,1\}^{k-1}} \exp(-\beta H(v1))=\sum_{v\in \{0,1\}^{k-1}} \exp(-\beta H(v0))=\frac{1}{2}\sum_{v\in \{0,1\}^{k}} \exp(-\beta H(v)).\]

By differentiating, we see that
\begin{align*}
\partial_t F(\beta,0)=-1+(d-1)\left(\frac{\sum_{v\in \{0,1\}^{k-1}} |v|\exp(-\beta H(v1))}{\sum_{v\in \{0,1\}^{k-1}} \exp(-\beta H(v1))}-\frac{\sum_{v\in \{0,1\}^{k-1}} |v|\exp(-\beta H(v0))}{\sum_{v\in \{0,1\}^{k-1}} \exp(-\beta H(v0))}\right).
\end{align*}

%For $v=(v_1,v_2,\dots,v_{k})\in \{0,1\}^{k}$, we set $\bar{v}=(1-v_1,1-v_2,\dots,1-v_{k})\in \{0,1\}^{k}$. 
Here we have
\begin{align*}
&\frac{\sum_{v\in \{0,1\}^{k-1}} |v|\exp(-\beta H(v1))}{\sum_{v\in \{0,1\}^{k-1}} \exp(-\beta H(v1))}-\frac{\sum_{v\in \{0,1\}^{k-1}} |v|\exp(-\beta H(v0))}{\sum_{v\in \{0,1\}^{k-1}} \exp(-\beta H(v0))}\\
&\qquad= \frac{\sum_{v\in \{0,1\}^{k-1}} |v|\exp(-\beta H(v1))-\sum_{v\in \{0,1\}^{k-1}} |v|\exp(-\beta H(v0))}{\frac{1}{2}\sum_{v\in \{0,1\}^{k}} \exp(-\beta H(v))}.
\end{align*}

We set
\[\gamma=\sum_{v\in \{0,1\}^{k-1}} |v|\exp(-\beta H(v1))-\sum_{v\in \{0,1\}^{k-1}} |v|\exp(-\beta H(v0)).\]

Note that 
\[\gamma=\sum_{\substack{v\in \{0,1\}^{k}\\v_k=1}} (|v|-1)\exp(-\beta H(v))-\sum_{\substack{v\in \{0,1\}^{k}\\v_k=0}} |v|\exp(-\beta H(v)).\]

By symmetry, we have
\begin{align*}
k\gamma &=\sum_{i=1}^k\left(\sum_{\substack{v\in \{0,1\}^{k}\\v_i=1}} (|v|-1)\exp(-\beta H(v))-\sum_{\substack{v\in \{0,1\}^{k}\\v_i=0}} |v|\exp(-\beta H(v))\right)\\
&=\sum_{v\in \{0,1\}^k} \left(|v|(|v|-1)-(k-|v|)|v|\right) \exp(-\beta H(v))\\
&=\sum_{v\in \{0,1\}^k} \left(2\left(|v|-\frac{k}{2}\right)^2 -\frac{k^2}{2}+(k-1)|v|\right) \exp(-\beta H(v)).
\end{align*}

For $v=(v_1,v_2,\dots,v_{k})\in \{0,1\}^{k}$, we set $\bar{v}=(1-v_1,1-v_2,\dots,1-v_{k})\in \{0,1\}^{k}$. Now we have
\begin{align*}
\sum_{v\in \{0,1\}^k} |v|\exp(-\beta H(v))&=\frac{1}{2} \sum_{v\in \{0,1\}^k} (|v|\exp(-\beta H(v))+|\bar{v}|\exp(-\beta H(\bar{v}))\\&=\frac{1}{2} \sum_{v\in \{0,1\}^k} (|v|+|\bar{v}|)\exp(-\beta H(v))\\&=\frac{k}{2}\sum_{v\in \{0,1\}^k} \exp(-\beta H(v)).
\end{align*}

Therefore,
\[k\gamma=\sum_{v\in \{0,1\}^k} \left(2\left(|v|-\frac{k}{2}\right)^2 -\frac{k}{2}\right) \exp(-\beta H(v)).\]

We set $f(v)=2\left(|v|-\frac{k}{2}\right)^2 -\frac{k}{2}$. 
Now we have,
\[\partial_t F(\beta,0)=-1+\frac{2(d-1)}{k}\cdot\frac{\sum_{v\in \{0,1\}^k} f(v)\exp(-\beta H(v))}{\sum_{v\in \{0,1\}^k} \exp(-\beta H(v))}.\]

Taking derivative with respect to $\beta$, we have
\begin{align*} \partial_\beta \partial_t F(\beta,0)&=\frac{2(d-1)}{k} \cdot \frac{\left(\sum_{v\in \{0,1\}^k} f(v)\exp(-\beta H(v))\right)\left(\sum_{v\in \{0,1\}^k} H(v)\exp(-\beta H(v))\right)}{\left(\sum_{v\in \{0,1\}^k} \exp(-\beta H(v))\right)^2}\\&\qquad-\frac{2(d-1)}{k} \cdot\frac{\left(\sum_{v\in \{0,1\}^k} H(v)f(v)\exp(-\beta H(v))\right)\left(\sum_{v\in \{0,1\}^k} \exp(-\beta H(v))\right)}{\left(\sum_{v\in \{0,1\}^k} \exp(-\beta H(v))\right)^2}\\&\ge 0,
\end{align*}
by Lemma \ref{FKG}.
% \begin{align*}
% \gamma&=\sum_{v\in \{0,1\}^{k-1}} \left(|v|\exp(-\beta H(v1))- |\bar{v}|\exp(-\beta H(\bar{v}0))\right)\\
% &=\sum_{v\in \{0,1\}^{k-1}} \left(|v|\exp(-\beta H(v1))- (k-1-|v|)\exp(-\beta H(v1))\right)\\
% &=\sum_{v\in \{0,1\}^{k-1}} \left(\exp(-\beta H(v1)) (2|v|-(k-1))\exp(-\beta H(v1))\right)\\
% &=\sum_{\substack{v\in \{0,1\}^{k}\\v_k=1}} \left(\exp(-\beta H(v)) (2|v|-(k+1))\exp(-\beta H(v))\right)
% \end{align*}
\end{proof}

\begin{lemma}\label{diffbeta}
Let $t>0$. For $\beta\ge 0$, we have $\partial_\beta F(\beta,t) \ge 0$.%, so $\beta\mapsto F(\beta,t)$ is increasing on the interval $[0,\infty)$.
\end{lemma}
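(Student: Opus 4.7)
The plan is to compute $\partial_\beta F$ by direct differentiation and then show the resulting expression is non-negative using an involution on the summation indices that exploits the symmetry $h_m = h_{k-m}$.

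First I differentiate the defining formula for $F$ to obtain
\[
\partial_\beta F(\beta,t) = (d-1)\left[\frac{\sum_{i=0}^{k-1}\binom{k-1}{i} h_i e^{-\beta h_i + it}}{\sum_{i=0}^{k-1}\binom{k-1}{i} e^{-\beta h_i + it}} - \frac{\sum_{i=0}^{k-1}\binom{k-1}{i} h_{i+1} e^{-\beta h_{i+1} + it}}{\sum_{i=0}^{k-1}\binom{k-1}{i} e^{-\beta h_{i+1} + it}}\right].
\]
Substituting $j=i+1$ in the second bracket (the factor $e^{-t}$ cancels between numerator and denominator) and introducing
\[
a_i = \binom{k-1}{i} e^{-\beta h_i + it}\ \ (0\le i\le k-1,\ a_k=0),\qquad b_j = \binom{k-1}{j-1} e^{-\beta h_j + jt}\ \ (1\le j\le k,\ b_0=0),
\]
the inequality $\partial_\beta F \ge 0$ becomes, after a standard symmetrization in $(i,j)$,
\[
\sum_{i,j=0}^{k}(a_i b_j - a_j b_i)(h_i - h_j)\ \ge\ 0.
\]

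Next I pair terms using the involution $\sigma:(i,j)\mapsto (k-j,k-i)$ on $\{0,\dots,k\}^2$. The symmetry $h_m = h_{k-m}$ yields the key identities $a_{k-m}=b_m e^{(k-2m)t}$ and $b_{k-m}=a_m e^{(k-2m)t}$, from which a direct computation shows that the contribution of a size-two orbit $\{(i,j),\sigma(i,j)\}$ simplifies to
\[
(h_i-h_j)(a_i b_j - a_j b_i)\bigl[1 - e^{(2k-2(i+j))t}\bigr],
\]
while fixed points of $\sigma$ (those with $i+j=k$) contribute zero because $h_i = h_{k-i} = h_j$ there.

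The argument is then concluded by a three-sign count on the displayed orbit contribution. The symmetric concavity of $h$ makes it unimodal about $k/2$, so $h_i\ge h_j$ iff $|i-k/2|\le |j-k/2|$, equivalently $\operatorname{sgn}(h_i-h_j)=\operatorname{sgn}((j-i)(i+j-k))$. The binomial identity $\binom{k-1}{i}\binom{k-1}{j-1}/\bigl[\binom{k-1}{j}\binom{k-1}{i-1}\bigr] = j(k-i)/[i(k-j)]$ gives $\operatorname{sgn}(a_i b_j - a_j b_i)=\operatorname{sgn}(j-i)$, and for $t>0$ one has $\operatorname{sgn}(1-e^{(2k-2(i+j))t})=\operatorname{sgn}(i+j-k)$. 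Multiplying, the three sign factors combine to $(j-i)^2(i+j-k)^2 \ge 0$, so every orbit contributes non-negatively and the inequality follows. The main obstacle I anticipate is not conceptual but bookkeeping: justifying the equivalence $\operatorname{sgn}(h_i-h_j)=\operatorname{sgn}((j-i)(i+j-k))$ from the hypotheses on $h$, and checking that the boundary indices ($i\in\{0,k\}$ or $j\in\{0,k\}$, where $a$ or $b$ may vanish) do not spoil the involution analysis---both turn out to be routine.
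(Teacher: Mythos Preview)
Your proof is correct and takes a genuinely different route from the paper's.

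The paper rewrites the numerator and denominator of $F$ using the symmetrization $v\leftrightarrow\bar v$, introducing auxiliary functions $f(s)=s e^{st}+(k-s)e^{(k-s)t}$ and $g(s)=(k-s)e^{st}+s e^{(k-s)t}$, shows via a derivative computation that $f/g$ is monotone in $|s-k/2|$ (which is the same quantity governing $H$), and then invokes the correlation inequality of Lemma~\ref{FKG2}. Your approach bypasses this machinery entirely: after the standard symmetrization $\sum_{i,j}(h_i-h_j)(a_ib_j-a_jb_i)$, you pair terms via the involution $(i,j)\mapsto(k-j,k-i)$ and reduce everything to the product of three explicitly computable signs. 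The key identities $a_{k-m}=b_m e^{(k-2m)t}$ and $b_{k-m}=a_m e^{(k-2m)t}$ (which encode the symmetry $h_m=h_{k-m}$) make the orbit expression factor cleanly, and the sign count $(j-i)^2(i+j-k)^2\ge 0$ finishes the argument.

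What each buys: the paper's proof fits into the FKG/log-supermodularity framework used throughout the paper, so it reuses infrastructure already in place. Your argument is more self-contained and elementary---no correlation inequality is invoked---and makes the role of the hypothesis $t>0$ completely transparent (it enters only through $\operatorname{sgn}(1-e^{(2k-2(i+j))t})$). The point you flag as a potential obstacle, namely $\operatorname{sgn}(h_i-h_j)=\operatorname{sgn}((j-i)(i+j-k))$, is indeed routine: symmetric concavity forces $h$ to be non-increasing in $|m-k/2|$, so $h_i>h_j$ implies $|i-k/2|<|j-k/2|$, i.e.\ $(i-j)(i+j-k)<0$; when $h_i=h_j$ the orbit term already vanishes, so no sign claim is needed there. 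The boundary indices $i\in\{0,k\}$ or $j\in\{0,k\}$ cause no trouble because $a_k=b_0=0$ and the identities $a_{k-m}=b_m e^{(k-2m)t}$, $b_{k-m}=a_m e^{(k-2m)t}$ continue to hold there.
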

\begin{proof}

We have

\[\sum_{v\in \{0,1\}^{k-1} } \exp(-H(v1)+|v|t)=\exp(-t)\sum_{\substack{v\in \{0,1\}^{k}\\ v_k=1} } \exp(-H(v)+|v|t).\]

By symmetry, we have
\begin{align*}
k\sum_{v\in \{0,1\}^{k-1} } &\exp(-\beta H(v1)+|v|t)\\&=\exp(-t)\sum_{i=1}^k \sum_{\substack{v\in \{0,1\}^{k}\\ v_i=1} } \exp(-\beta H(v)+|v|t)\\
&=\exp(-t) \sum_{v\in \{0,1\}^{k} } |v|\exp(-\beta H(v)+|v|t)\\
&=\frac{1}{2}\exp(-t) \sum_{v\in \{0,1\}^{k} } \left(|v|\exp(-\beta H(v)+|v|t) +|\bar{v}|\exp(-\beta H(\bar{v})+|\bar{v}|t) \right)\\
&=\frac{1}{2}\exp(-t) \sum_{v\in \{0,1\}^{k} } \exp(-\beta H(v))\left(|v|\exp(|v|t) +(k-|v|)\exp((k-|v|)t) \right).
\end{align*}

Similarly,
\begin{multline*}
k\sum_{v\in \{0,1\}^{k-1} } \exp(-\beta H(v0)+|v|t)\\=\frac{1}{2} \sum_{v\in \{0,1\}^{k} } \exp(-\beta H(v))\left((k-|v|)\exp(|v|t) +|v|\exp((k-|v|)t) \right).
\end{multline*}

Let us introduce the notations,
\begin{align*}
f(s)&=s\exp(st) +(k-s)\exp((k-s)t),\\
g(s)&=(k-s)\exp(st) +s\exp((k-s)t).
\end{align*}

\begin{lemma}\label{fperg}
We have
\[\frac{f(s)}{g(s)}=\frac{f(k-s)}{g(k-s)}.\]
If $t>0$, then the function $s\mapsto \frac{f(s)}{g(s)}$ is monotone decreasing on the interval $(-\infty,\frac{k}2]$, and 
monotone decreasing on the interval $[\frac{k}2,\infty)$.
\end{lemma}
\begin{proof}
It follows by the examination of the derivative
\[\partial_s \frac{f(s)}{g(s)}=-\frac{2k\exp(kt)\left(t(k-2s)+\sinh(t(k-2s))\right)}{\left((k-s)\exp(st) +s\exp((k-s)t)\right)^2}.\]

\end{proof}

We have
\begin{align*}
\partial_\beta &F(\beta,t)\\&=(d-1)\partial_\beta \log\frac{\sum_{v\in \{0,1\}^{k} } \exp(-\beta H(v))f(|v|)}{\sum_{v\in \{0,1\}^{k} } \exp(-\beta H(v))g(|v|)}\\&=(d-1)\frac{\left(\sum_{v\in \{0,1\}^{k} } \exp(-\beta H(v))f(|v|)\right)\left(\sum_{v\in \{0,1\}^{k} } \exp(-\beta H(v))g(|v|)H(|v|)\right)}{\left(\sum_{v\in \{0,1\}^{k} } \exp(-\beta H(v))f(|v|)\right)\left(\sum_{v\in \{0,1\}^{k} } \exp(-\beta H(v))g(|v|)\right)}\\
&\qquad -(d-1)\frac{\left(\sum_{v\in \{0,1\}^{k} } \exp(-\beta H(v))f(|v|)H(|v|)\right)\left(\sum_{v\in \{0,1\}^{k} } \exp(-\beta H(v))g(|v|)\right)}{\left(\sum_{v\in \{0,1\}^{k} } \exp(-\beta H(v))f(|v|)\right)\left(\sum_{v\in \{0,1\}^{k} } \exp(-\beta H(v))g(|v|)\right)}\ge 0,
\end{align*}
where in the last step we used Lemma \ref{FKG2}. The assumptions of Lemma~\ref{FKG2} were verified in Lemma~\ref{fperg}. 

\end{proof}

We define
\[\beta_c=\inf \{\beta\quad|\quad \partial_t F(\beta,0)>0\}.\]

Note that $F(0,t)=-t$, thus $\partial_t F(0,0)<0$. By continuity  of $\partial_t F(\beta,0)$, it follows that $\beta_c>0$ and $\partial_t F(\beta_c,0)=0$.  
It follows from Lemma \ref{betacexists} combined with Lemma \ref{monotonbeta} that $t\mapsto F(\beta,t)$ has a unique positive zero if and only if $\beta>\beta_c$. For $\beta>\beta_c$, we define $\mathfrak{g}(\beta)$ as the unique positive zero of $t\mapsto F(\beta,t)$. For $\beta>\beta_c$, we have $\partial_t F(\beta,\mathfrak{g}(\beta))<0$ by Lemma \ref{difft} and $\partial_\beta F(\beta,\mathfrak{g}(\beta))\ge 0$ by Lemma \ref{diffbeta}. By the implicit function theorem, $\mathfrak{g}$ is differentiable at $\beta$ with derivative $-\frac{\partial_\beta F(\beta,\mathfrak{g}(\beta))}{\partial_t F(\beta,\mathfrak{g}(\beta))}\ge 0$. Thus, it follows that $\mathfrak{g}$ is differentiable and monotone increasing. Since $\mathfrak{g}$ is nonnegative and monotone increasing $t_0=\lim_{\beta\to\beta_c} \mathfrak{g}(\beta)$ exist and $t_0\in \mathbb{R}_{\ge 0}$. Since the zero set of $F$ is a closed set, we see that $F(\beta_c,t_0)=0$. Then it follows that we must have $t_0=0$.
\section{Minimizing the expected energy} \label{sec4}

Fix a $\beta$, and let us define
\begin{align*}
m(t_1,t_2,\dots, t_{k-1})&=\log\left(\sum_{v\in \{0,1\}^{k-1}} \exp\left(-\beta H(v1)+\sum_{i=1}v_i t_i\right)\right)\\&-\log\left(\sum_{v\in \{0,1\}^{k-1}} \exp\left(-\beta H(v0)+\sum_{i=1}v_i t_i\right)\right).
\end{align*}

Let $(u_{i\to a}),(\hat{u}_{a\to i})$ be a BP fixed point on the $(d,k)$-biregular tree $\mathbb{T}$. By Equation \eqref{ueq1} and Equation \eqref{ueq2}, we have
\begin{equation}\label{genfix}
u_{i\to a}=\sum_{b\in \partial i\backslash a} m((u_{j\to b})_{j\in \partial b\backslash i}).
\end{equation}

\begin{lemma}\label{monotonmess}
The function $m(t_1,t_2,\dots, t_{k-1})$ is monotone increasing  in each variable. 
\end{lemma}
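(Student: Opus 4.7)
The plan is to observe that this is essentially a direct consequence of the log-supermodularity computation already carried out in the proof of Lemma \ref{lemmaconcave}. First, since the function $m$ is symmetric in its $k-1$ arguments (the variables $t_1,\dots,t_{k-1}$ play interchangeable roles in both the linear part $\sum_i v_i t_i$ and the weight $H(v0)$ or $H(v1)$, because $H(v)=h_{|v|}$ depends only on $|v|$), it suffices to prove monotonicity in $t_{k-1}$.

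Using the quantities $A,B,C,D$ defined in the proof of Lemma~\ref{lemmaconcave}, I would rewrite
\[
m(t_1,\dots,t_{k-1})=\log\bigl(A\exp(t_{k-1})+B\bigr)-\log\bigl(C\exp(t_{k-1})+D\bigr),
\]
and compute
\[
\partial_{t_{k-1}} m=\frac{A\exp(t_{k-1})}{A\exp(t_{k-1})+B}-\frac{C\exp(t_{k-1})}{C\exp(t_{k-1})+D}=\frac{(AD-BC)\exp(t_{k-1})}{\bigl(A\exp(t_{k-1})+B\bigr)\bigl(C\exp(t_{k-1})+D\bigr)}.
\]
So the sign of $\partial_{t_{k-1}} m$ is the sign of $AD-BC$.

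The inequality $AD\ge BC$ is exactly the inequality already verified in the proof of Lemma~\ref{lemmaconcave}: the function $f(v)=\exp(-\beta H(v)+\sum_{i=1}^{k-2}v_i t_i)$ on $\{0,1\}^k$ is log-supermodular, because concavity and symmetry of the sequence $(h_i)$ yield $h_{|u|}+h_{|w|}\ge h_{|u\vee w|}+h_{|u\wedge w|}$ for all $u,w\in\{0,1\}^k$ (the linear term $\sum v_i t_i$ is modular). By Lemma~\ref{lsmmarg} the marginal of $f$ onto the last two coordinates is log-supermodular, and this marginal evaluated at $(1,1),(0,1),(1,0),(0,0)$ gives $A,B,C,D$ respectively, so $AD\ge BC$.

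I do not foresee a genuine obstacle: the monotonicity follows by essentially the same log-supermodularity argument that powers Lemma~\ref{lemmaconcave}, and everything else is a one-line derivative computation plus the symmetry remark.
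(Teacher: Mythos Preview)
Your proposal is correct and follows essentially the same approach as the paper: reduce to $t_{k-1}$ by symmetry, express $m$ via $A,B,C,D$, differentiate to get the sign governed by $AD-BC$, and conclude using the log-supermodularity inequality $BC\le AD$ already established in the proof of Lemma~\ref{lemmaconcave}. Your version even spells out explicitly why $f$ is log-supermodular (concavity of $(h_i)$ plus modularity of the linear term), which the paper leaves as a reference back to that earlier proof.
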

\begin{proof}
By symmetry, it is enough to prove that it is increasing in $t_{k-1}$. 
 First, let us define
\begin{align*}
A&=A(t_1,t_2,\dots,t_{k-2})=\sum_{v\in \{0,1\}^{k-2}} \exp\left(-\beta H(v11)+\sum_{i=1}^{k-2} v_i t_i\right)\\
B&=B(t_1,t_2,\dots,t_{k-2})=\sum_{v\in \{0,1\}^{k-2}} \exp\left(-\beta H(v01)+\sum_{i=1}^{k-2} v_i t_i\right)\\
C&=C(t_1,t_2,\dots,t_{k-2})=\sum_{v\in \{0,1\}^{k-2}} \exp\left(-\beta H(v10)+\sum_{i=1}^{k-2} v_i t_i\right)\\
D&=D(t_1,t_2,\dots,t_{k-2})=\sum_{v\in \{0,1\}^{k-2}} \exp\left(-\beta H(v00)+\sum_{i=1}^{k-2} v_i t_i\right)
\end{align*}

%It is straightforward from the definition of $H$ that we have $B=C$. 

With these notations, we have
\[m(t_1,t_2,\dots, t_{k-1})=\log\left(A\exp(t_{k-1})+B\right)-(d-1)\log\left(C\exp(t_{k-1})+D\right).\]  

Thus, a direct computation gives us that
\[\partial_{t_{k-1}} m(t_1,t_2,\dots,t_{k-1})=\frac{\exp(t_{k-1})(AD-BC)}{\left(A\exp(t_{k-1})+B\right) \left(C\exp(t_{k-1})+D\right)}.\]

As in the proof of Lemma \ref{lemmaconcave}, we have
\[BC\le AD,\]
which gives the statement.
\end{proof}

\begin{lemma}\label{lemmahatar}
Let $(u_{i\to a}),(\hat{u}_{a\to i})$ be a BP fixed point on the $(d,k)$-biregular tree $\mathbb{T}$. Then, for each edge $ia$, we have
\[-\mathfrak{g}(\beta)\le u_{i\to a}\le \mathfrak{g}(\beta). \]
\end{lemma}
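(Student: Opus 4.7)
The plan is to let $U=\sup_{i,a} u_{i\to a}$ and $L=\inf_{i,a} u_{i\to a}$ range over all directed variable-to-factor edges of $\mathbb{T}$, and show that these two extrema satisfy $F(\beta,U)\ge 0$ and $F(\beta,L)\le 0$. Once this is done, the characterization of the zero set of $F(\beta,\cdot)$ in Lemma \ref{lemmafix}, combined with the concavity in Lemma \ref{lemmaconcave}, pins $U$ and $L$ to $[-\mathfrak{g}(\beta),\mathfrak{g}(\beta)]$.

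First I would establish that $m$ is uniformly bounded, independently of its arguments. Writing
\[m(t_1,\ldots,t_{k-1})=\log\frac{\sum_{v\in\{0,1\}^{k-1}}\exp(-\beta h_{|v|+1})\,w_v}{\sum_{v\in\{0,1\}^{k-1}}\exp(-\beta h_{|v|})\,w_v}\]
with $w_v=\exp\bigl(\sum_i v_i t_i\bigr)>0$, the quantity inside the log is a weighted average of the ratios $\exp\bigl(-\beta(h_{|v|+1}-h_{|v|})\bigr)$, so
\[m(t_1,\ldots,t_{k-1})\in \bigl[-\beta\max_i(h_{i+1}-h_i),\;-\beta\min_i(h_{i+1}-h_i)\bigr].\]
The fixed-point equation \eqref{genfix} then expresses $u_{i\to a}$ as a sum of $d-1$ such bounded quantities, so $U$ and $L$ are finite. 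Applying Lemma \ref{monotonmess} (coordinate-wise monotonicity of $m$) to \eqref{genfix} yields
\[u_{i\to a}\le (d-1)\,m(U,\ldots,U)\qquad\text{and}\qquad u_{i\to a}\ge (d-1)\,m(L,\ldots,L)\]
for every edge $ia$; taking sup and inf respectively gives $F(\beta,U)\ge 0$ and $F(\beta,L)\le 0$.

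To conclude, I would use that $F(\beta,0)=0$, that $F(\beta,t)\to-\infty$ as $t\to\infty$, and that $F(\beta,\cdot)$ is concave on $[0,\infty)$ by Lemma \ref{lemmaconcave}; together these force the nonnegativity set of $F(\beta,\cdot)$ on $[0,\infty)$ to be exactly $[0,\mathfrak{g}(\beta)]$. Thus $F(\beta,U)\ge 0$ forces $U\le\mathfrak{g}(\beta)$ whenever $U\ge 0$, and the case $U<0$ is trivial since $\mathfrak{g}(\beta)\ge 0$. The lower bound on $L$ is then immediate from the antisymmetry $F(\beta,-t)=-F(\beta,t)$ noted in Section \ref{sec3}: it reflects the whole picture across $0$ and gives $L\ge-\mathfrak{g}(\beta)$. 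The one place needing care is really just Step~1, the uniform boundedness of $m$; without it the inequality $U\le(d-1)\,m(U,\ldots,U)$ could be a vacuous $\infty\le\infty$, so ensuring $U,L\in\mathbb{R}$ is the main (and essentially the only) technical point.
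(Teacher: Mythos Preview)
Your argument is correct and uses the same two ingredients as the paper: a uniform a priori bound on the messages, and the coordinate-wise monotonicity of $m$ (Lemma \ref{monotonmess}). The only difference is in how the final step is organized. The paper starts from the crude upper bound $t_0=(d-1)\beta(h_{\lfloor k/2\rfloor}-h_0)$, iterates $t_{n+1}=(d-1)m(t_n,\ldots,t_n)$, and argues by induction that every message satisfies $u_{i\to a}\le t_n$, so that $u_{i\to a}\le\lim_n t_n=\mathfrak{g}(\beta)$. You instead pass directly to $U=\sup u_{i\to a}$, obtain $F(\beta,U)\ge 0$, and invoke the sign structure of $F(\beta,\cdot)$ from Lemmas \ref{lemmafix} and \ref{lemmaconcave}. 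Your route is marginally cleaner in that it sidesteps the (implicit) verification that the iterates $t_n$ actually converge down to $\mathfrak{g}(\beta)$; the paper's route, on the other hand, makes the dynamical picture (BP iteration contracting to the extremal fixed point) more explicit. Either way, the substance is the same.
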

\begin{proof}
By examining the formula \eqref{genfix}, we see that
 \[u_{i\to a}\le (d-1)\beta(h_{\lfloor k/2\rfloor}-h_0).\]
We set $t_0=(d-1)\beta(h_{\lfloor k/2\rfloor}-h_0)$, and for $n\ge 0$, we set
\[t_{n+1}=(d-1)m(t_n,t_n,\dots,t_n).\]
Using Lemma \ref{monotonmess},  by induction, we see that $u_{i\to a}\le t_{n}$ for all $n$. Thus,
\[u_{i\to a}\le\lim_{n\to\infty} t_n=\mathfrak{g}(\beta).\]
The inequality, $-\mathfrak{g}(\beta)\le u_{i\to a}$ can be proved the same way. 
\end{proof}

Fix a $\beta\ge 0$. For $t\in \mathbb{R}^k$, we define
\[a_\beta(t)=\frac{\sum_{v\in \{0,1\}^k} -H(v)\exp(-\beta H(v)+(v,t))}{\sum_{v\in \{0,1\}^k} \exp(-\beta H(v)+(v,t))},\] 
where $(v,t)=\sum_{i=1}^k v_it_i$.
%If $\beta<\beta_c$, then we set $h=0$, otherwise we let $h=g(\beta)$, where $g$ is the function provided by Theorem . 
Let $t^*=(\mathfrak{g}(\beta),\mathfrak{g}(\beta),\dots,\mathfrak{g}(\beta))\in \mathbb{R}^k$.

\begin{lemma}\label{lemmaabeta}
We have
\[\max_{t\in [-\mathfrak{g}(\beta),\mathfrak{g}(\beta)]^k} a_\beta(t)=a_\beta(t^*)=a_\beta(-t^*).\]
\end{lemma}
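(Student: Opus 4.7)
My plan has three stages.

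Stage 1 (symmetry). The equality $a_\beta(t^*)=a_\beta(-t^*)$ follows from the involution $v\mapsto \bar v=(1-v_1,\ldots,1-v_k)$ applied to each sum defining $a_\beta(-t^*)$. The symmetry $h_j=h_{k-j}$ yields $H(\bar v)=H(v)$, while $(\bar v,t)=\sum_i t_i-(v,t)$, so the common factor $e^{\sum_i t_i}$ cancels in the ratio. This also shows $a_\beta(t)=a_\beta(-t)$ for every $t$.

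Stage 2 (reduction to vertices). Fix an index $i$ and all $t_j$ with $j\neq i$, and split each sum defining $a_\beta(t)$ according to the value of $v_i\in\{0,1\}$. This rewrites
\[
a_\beta(t)\;=\;\frac{C+D\,e^{t_i}}{A+B\,e^{t_i}},
\]
where $A,B>0$ and $A,B,C,D$ depend only on $(t_j)_{j\neq i}$. Hence $\partial_{t_i}a_\beta(t)=(DA-BC)\,e^{t_i}/(A+Be^{t_i})^2$ has constant sign in $t_i$, so $a_\beta$ is monotone on $[-\mathfrak g,\mathfrak g]$ in $t_i$ and its maximum there is attained at an endpoint. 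Iterating over all coordinates, the maximum over $[-\mathfrak g,\mathfrak g]^k$ is attained at some vertex $t\in\{-\mathfrak g,\mathfrak g\}^k$.

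Stage 3 (comparison of vertices). Since $H(v)=h(|v|)$ is symmetric in its coordinates, the value $a_\beta(t)$ at a vertex depends only on $r:=\#\{i:t_i=\mathfrak g\}$; call this $a(r)$. Stage~1 applied at vertices gives $a(r)=a(k-r)$, so it remains to show $a(r)\le a(k)$. I would prove the stronger statement that $a(r+1)\ge a(r)$ for $r\ge k/2$, which together with the palindromic identity yields the claim. By Stage~2, this inductive step reduces to verifying $DA-BC\ge 0$ whenever $(t_j)_{j\neq i}\in\{-\mathfrak g,\mathfrak g\}^{k-1}$ has at least $\lceil k/2\rceil$ coordinates equal to $+\mathfrak g$. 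A further $w\mapsto\bar w$ substitution on $\{0,1\}^{k-1}$ (using $h_j=h_{k-j}$) recasts this as
\[
E_\mu[h(|w|)]\;\ge\;E_{\tilde\mu}[h(|w|)],
\]
where $\mu\propto e^{-\beta h(|w|)+(w,t')}$ and $\tilde\mu\propto e^{-\beta h(|w|)-(w,t')}$ with $t'=(t_j)_{j\neq i}$. This inequality is the main obstacle: intuitively, when $t'$ has more $+\mathfrak g$'s than $-\mathfrak g$'s, the distribution of $|w|$ under $\mu$ is pushed nearer to $k/2$ than under $\tilde\mu$, and the concavity and symmetry of $h$ then favour $\mu$. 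I expect to make this rigorous via Lemma~\ref{FKG2}, applied with an appropriate choice of $(K,f,g)$ that exploits the log-supermodularity of the base measure $e^{-\beta h(|w|)}$ (guaranteed by Lemma~\ref{lsmmarg}) together with the fact that the discrete derivative $h(j+1)-h(j)$ is decreasing in $j$ and antisymmetric about $(k-1)/2$.
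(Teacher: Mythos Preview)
Your Stages~1 and~2 are correct. The vertex reduction in Stage~2 (each coordinate enters $a_\beta$ as a M\"obius transformation, hence monotonically) is a clean observation that the paper does not make; the paper works with arbitrary $t\in[-\mathfrak g,\mathfrak g]^k$ throughout.

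Stage~3, however, is not a proof. You correctly recast $DA-BC\ge 0$ as $E_\mu[h(|w|)]\ge E_{\tilde\mu}[h(|w|)]$, but then write ``I expect to make this rigorous via Lemma~\ref{FKG2}.'' That is the entire difficulty, and the sketch does not close it. Lemma~\ref{FKG2} needs a hypothesis of the form ``$K(x)\le K(y)\Rightarrow f(x)/g(x)\le f(y)/g(y)$''. The natural $K$ here is $h(|w|)$, which is not monotone in $w$ or in $|w|$ (it is unimodal and symmetric about $k/2$), while the density ratio $\mu(w)/\tilde\mu(w)=e^{2(w,t')}$ is monotone in $w$ but not in $h(|w|)$. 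So no obvious choice of $(K,f,g)$ satisfies the hypothesis, and neither log-supermodularity of $e^{-\beta h(|w|)}$ nor concavity of $h$ alone produces it.

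The paper supplies the missing monotonicity by a different mechanism. It groups configurations by $|v|$, sets $e(i)=\sum_{|v|=i}e^{(v,t)}$ and $f(i)=e(i)/\binom{k}{i}$, and proves (Lemma~\ref{lemmamonN}) that
\[
i\;\longmapsto\;\frac{f(i)+f(k-i)}{z^{i}+z^{k-i}}
\]
is increasing on $\{0,\dots,\lfloor k/2\rfloor\}$. The key input is \emph{Newton's inequality} for elementary symmetric polynomials, which gives that $f(i+1)/f(i)$ is decreasing and lies in $[z^{-1},z]$; this log-concavity is exactly what allows the comparison with the extremal weights $z^i+z^{k-i}$. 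Once that monotonicity is established, Lemma~\ref{FKG2} applies with $K(i)=h_i$, since $h_i$ is monotone in the same ``distance from $k/2$'' order. Your vertex reduction does not bypass this: even for $t\in\{-\mathfrak g,\mathfrak g\}^k$ one still needs a log-concavity statement about $i\mapsto e(i)$ (the $e^{t_j}$ are then just $z$ and $z^{-1}$, and Newton's inequality is again the tool). That ingredient is absent from your outline.
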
 
\begin{proof}
Let $z=\exp(\mathfrak{g}(\beta))$. Fix any $t\in [-\mathfrak{g}(\beta),\mathfrak{g}(\beta)]^k$. For $i\in \{0,1,\dots, k\}$, we introduce the notations
\begin{align*}
e(i)&=\sum_{\substack{v\in \{0,1 \}^k\\ |v|=i}} \exp((v,t)),\\
f(i)&=\frac{e(i)}{{{k}\choose{i}}},\\
N_i&=z^{i}+z^{k-i}.
\end{align*}

\begin{lemma}\label{lemmamonN}
For $0\le i\le \lfloor \frac{k}2\rfloor$, the function
\[\frac{f(i)+f(k-i)}{z^{i}+z^{k-i}}=\frac{f(i)+f(k-i)}{N_i}\]
is monotone increasing.
\end{lemma}
\begin{proof}
It follows from Newton's inequality \cite[Theorem 144.]{hardy1952inequalities} that $\frac{f(i+1)}{f(i)}$ is monotone decreasing. Also note that
\[\frac{f(1)}{f(0)}=\frac{\sum_{i=1}^k \exp(t_i)}{k}\le z\]
and
\[\frac{f(k-1)}{f(k)}=\frac{\sum_{i=1}^k \exp(-t_i)}{k}\ge z^{-1}.\]
Thus,
\begin{equation}\label{korlat}
z\ge \frac{f(1)}{f(0)}\ge \frac{f(i+1)}{f(i)}\ge \frac{f(k-1)}{f(k)}\ge z^{-1}. 
\end{equation}

We need to prove that for $i<\frac{k}{2}$, we have
\[\frac{f(i+1)+f(k-i-1)}{N_{i+1}}\ge \frac{f(i)+f(k-i)}{N_{i}}.\]
Using the notations $\alpha=\frac{f(i+1)}{f(i)}$ and $\gamma=\frac{f(k-i)}{f(k-i-1)}$, this is equivalent to
\[\alpha N_i-N_{i+1}+\frac{f(k-i-1)}{f(i)}\left(N_i-\gamma N_{i+1}\right)\ge 0.\]

We distinguish two cases. 
\begin{itemize}
\item First case: $N_i-\gamma N_{i+1}\le 0$. 

Since $\frac{f(j+1)}{f(j)}$ is monotone decreasing, we have
\[\frac{f(k-i-1)}{f(i)}\le \alpha^{k-2i-1}.\] 
Also note $\alpha\ge \gamma$. Thus,
\begin{align*}
\alpha N_i-N_{i+1}+&\frac{f(k-i-1)}{f(i)}\left(N_i-\gamma N_{i+1}\right)\\&\ge
\alpha N_i-N_{i+1}+\alpha^{k-2i-1} \left(N_i-\gamma N_{i+1}\right)\\&\ge
\alpha N_i-N_{i+1}+\alpha^{k-2i-1} \left(N_i-\alpha N_{i+1}\right)\\
&=N_i(\alpha+\alpha^{k-2i-1})-N_{i+1}(1+\alpha^{k-2i}).
\end{align*}
Therefore, it is enough to prove that
\[\frac{N_{i+1}}{N_{i}}\le \frac{\alpha+\alpha^{k-2i-1}}{1+\alpha^{k-2i}}.\]
\begin{lemma}
Let us consider the function 
\[q(x)=\frac{x+x^{k-2i-1}}{1+x^{k-2i}}.\]
Then for any $x>0$, we have $q(x)=q(x^{-1})$.

Moreover, $q(x)$ is monotone increasing on $(0,1]$, and it is monotone decreasing on $[1,\infty)$.

Consequently, for $z\ge 1$, we have

\[\min_{x\in [z^{-1},z]} q(x)=q(z)=q(z^{-1}).\]  
\end{lemma}
\begin{proof}
The first statement can be verified by a direct calculation. 

The second statement can be obtained by investigating the derivative
\[q'(x)=\frac{(k-2i-1)(x^{k-2i}-x^{k-2i+2})+x^2-x^{2(k-2i)}}{x^2(1+x^{k-2i})^2}.\]
Note that here we need to use that $2i<k$.

The third statement follows by combining the previous two. 
\end{proof}

Recall our observation in \eqref{korlat} that $z^{-1}\le \alpha\le z$. Thus combining this with the previous lemma, we get
\[\frac{\alpha+\alpha^{k-2i-1}}{1+\alpha^{k-2i}}=q(\alpha)\ge \min_{x\in [z^{-1},z]} q(x)=q(z)=\frac{N_{i+1}}{N_i}.\]
Thus the statement follows.
\item Second case: $N_i-\gamma N_{i+1}\ge 0$. 

Since $\frac{f(j+1)}{f(j)}$ is monotone decreasing, we have
\[\frac{f(k-i-1)}{f(i)}\ge \gamma^{k-2i-1}.\] 
Also note $\alpha\ge \gamma$. Thus,
\begin{align*}
\alpha N_i-N_{i+1}+&\frac{f(k-i-1)}{f(i)}\left(N_i-\gamma N_{i+1}\right)\\&\ge
\gamma N_i-N_{i+1}+\gamma^{k-2i-1} \left(N_i-\gamma N_{i+1}\right)\\
&=N_i(\gamma+\gamma^{k-2i-1})-N_{i+1}(1+\gamma^{k-2i}).
\end{align*}
We  can be finish the proof the same way as we did in the previous case.
\end{itemize}

Now we are ready to finish the proof of Lemma \ref{lemmaabeta}.

Observe that 
\begin{align*}
a_\beta(t)&=\frac{\sum_{v\in \{0,1\}^k} -H(v)\exp(-\beta H(v)+(v,t))}{\sum_{v\in \{0,1\}^k} \exp(-\beta H(v)+(v,t))}\\&=\frac{\sum_{i=0}^k -h_i \exp(-\beta h_i) e(i)}{\sum_{i=0}^k  \exp(-\beta h_i) e(i)}\\
&=\frac{\sum_{i=0}^k -h_i \exp(-\beta h_i) (e(i)+e(k-i))}{\sum_{i=0}^k  \exp(-\beta h_i) (e(i)+e(k-i))}.
\end{align*}  
\end{proof}
In particular,
\[a_\beta(t^*)=\frac{\sum_{i=0}^k -h_i{{k}\choose{i}} (z^i+z^{k-i})\exp(-\beta h_i)}{\sum_{i=0}^k {{k}\choose{i}} (z^i+z^{k-i})\exp(-\beta h_i)}.\]
Thus, the statement of the theorem follows from Lemma~\ref{FKG2} with the choice of $X=\{0,1,\dots,k\}$, $\nu(i)=\exp(-\beta h_i)$, $f(i)= (e(i)+e(k-i))$, $g(i)={{k}\choose{i}}(z^i+z^{k-i})$, $K(i)=h_i$. The conditions Lemma~\ref{FKG2} are verified in Lemma \ref{lemmamonN}.

\end{proof}
Now we prove Lemma \ref{lemma4} in slightly different form.
\begin{lemma}\label{lemma22}
For a fixed $\beta$, let $\mu$ be a Gibbs measure on $\mathbb{T}$. Fix a factor node $a$. Let $X$ be a random element of $\{0,1\}^V$ with law $\mu$. Then
\[\mathbb{E}\left(-h\left(\sum_{i\in\partial a} X_i\right)\right) \le a_\beta(\mathfrak{g}(\beta),\mathfrak{g}(\beta),\dots,\mathfrak{g}(\beta)).\]
\end{lemma}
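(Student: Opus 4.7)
The plan is to reduce the statement to the case of extremal Gibbs measures, express the marginal of $X_{\partial a}$ via a BP fixed point using Lemma~\ref{lemma5}, and then combine the a priori bound on messages from Lemma~\ref{lemmahatar} with the maximization principle of Lemma~\ref{lemmaabeta}.

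First I would argue that it suffices to treat extremal $\mu$. The set of Gibbs measures on $\mathbb{T}$ is a Choquet simplex, so every Gibbs measure is an integral mixture of extremal Gibbs measures (see, e.g., Georgii's monograph). Since $\mu \mapsto \mathbb{E}_\mu(-h(\sum_{i\in \partial a} X_i))$ is linear in $\mu$ and bounded, it is enough to verify the inequality whenever $\mu$ is extremal.

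Next, for extremal $\mu$, Lemma~\ref{lemma5} yields a BP fixed point $(u_{i\to a}),(\hat{u}_{a\to i})$ on $\mathbb{T}$ governing all finite-dimensional marginals. Apply its formula with $F_R=\{a\}$, $V_R=\partial a$, $R = \{a\}\cup\partial a$; the boundary $\partial R$ then consists of the function nodes $b\in\partial i\setminus\{a\}$ for $i\in\partial a$, and using the BP identity \eqref{ueq1} $u_{i\to a}=\sum_{b\in\partial i\setminus a}\hat{u}_{b\to i}$, the formula collapses to
\[\mathbb{P}(X_{\partial a}=x_{\partial a})=\frac{1}{Z}\exp\left(-\beta h\left(\sum_{i\in\partial a}x_i\right)+\sum_{i\in\partial a}x_i\,u_{i\to a}\right).\]
Labeling $\partial a=\{1,\dots,k\}$ and setting $t_i=u_{i\to a}$, the left-hand side of the lemma becomes exactly $a_\beta(t_1,\dots,t_k)$, since $H(v)=h(|v|)$ and $(v,t)=\sum_i v_i t_i$ in the notation introduced before Lemma~\ref{lemmaabeta}.

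Finally, Lemma~\ref{lemmahatar} gives $(t_1,\dots,t_k)\in[-\mathfrak{g}(\beta),\mathfrak{g}(\beta)]^k$, so Lemma~\ref{lemmaabeta} bounds $a_\beta(t_1,\dots,t_k)\le a_\beta(t^*)=a_\beta(\mathfrak{g}(\beta),\dots,\mathfrak{g}(\beta))$, which is the desired inequality. The conceptually nontrivial step is the first one: invoking the extremal decomposition to pass from an arbitrary Gibbs measure to extremal ones, so that the BP parametrization of Lemma~\ref{lemma5} becomes available. Once one is in the extremal setting, the identification of the factor-node marginal with $a_\beta$ evaluated at the outgoing messages is immediate, and the heavy lifting has already been done in Lemma~\ref{lemmahatar} and Lemma~\ref{lemmaabeta}.
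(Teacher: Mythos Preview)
Your proof is correct and follows essentially the same approach as the paper: reduce to extremal Gibbs measures via the simplex structure, invoke Lemma~\ref{lemma5} to identify the factor-node marginal and hence $\mathbb{E}(-h(\sum_{i\in\partial a}X_i))$ with $a_\beta((u_{i\to a})_{i\in\partial a})$, and then apply Lemmas~\ref{lemmahatar} and~\ref{lemmaabeta}. Your explicit unpacking of how the formula of Lemma~\ref{lemma5} collapses (via \eqref{ueq1}) to the desired expression is a welcome clarification that the paper leaves implicit.
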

\begin{proof}
Since the set of Gibbs-measures form a simplex \cite[Chapter 7.3]{georgii2011gibbs}, we may assume that that $\mu$ is extremal. Let $(u_{i\to a}),(\hat{u}_{a\to i})$ be the corresponding BP fixed point provided by Lemma~\ref{lemma5}. This lemma also gives that
\[\mathbb{E}\left(-h\left(\sum_{i\in\partial a} X_i\right)\right)=a_\beta((u_{i\to a})_{i\in\partial a}).\]
By Lemma \ref{lemmahatar}, we have $u_{i\to a}\in [-\mathfrak{g}(\beta),\mathfrak{g}(\beta))]$. Thus by Lemma \ref{lemmaabeta}, we have
\[\mathbb{E}\left(-h\left(\sum_{i\in\partial a} X_i\right)\right)=a_\beta((u_{i\to a})_{i\in\partial a})\le a_\beta(\mathfrak{g}(\beta),\mathfrak{g}(\beta),\dots,\mathfrak{g}(\beta)).\]

\end{proof}

\section{Putting everything together: The proof of Theorem \ref{TheoremFO}}\label{putt}

For a $(d,k)$-biregular factor graph $G$ and a $\beta\ge 0$, we define the Gibbs probability measure $\mu_G^\beta$ on $\{0,1\}^V$ by setting
\[\mu_G^\beta(x)=\frac{\exp(-\beta H_G(x))}{Z_G(\beta)}.\] 

Note that 
\[\Phi_G'(\beta)=\frac{1}{|V|}\sum_{x\in \{0,1\}^V}\frac{-H_G(x)\exp(-\beta H_G(x))}{Z_G(\beta)}.\]
This can be also expressed as follows. Let $X\in \{0,1\}^V$ a random vector with law $\mu_G^\beta$, and let $a$ be a uniform random factor node independent from $X$. Then
\begin{equation}\label{phiGexp}
\Phi_G'(\beta)=\frac{d}{k}\mathbb{E}\left(-h\left(\sum_{i\in \partial a} X_i\right)\right),
\end{equation}
where the expectation is over the random choice of $X$ and $a$. 

By Fatou's lemma, we have
\[\limsup_{n\to\infty} (\Phi_{G_n}(\beta_1)-\Phi_{G_n}(0))\le \int_0^{\beta_1} \limsup_{n\to\infty} \Phi_{G_n}'(\beta) d\beta.\]
Now our aim is to give an upper bound on  $\limsup_{n\to\infty} \Phi_{G_n}'(\beta)$.  Recall the formula \eqref{phiGexp} for $\Phi_{G_n}'(\beta)$. Taking an appropriate subsequential weak limit one can obtain a Gibbs measure $\mu$ on the $(d,k)$-biregular infinite tree $\mathbb{T}$ with the following property. Let $a$ be a fixed variable node of $\mathbb{T}$, let $X\in \{0,1\}^V$ a random vector with law $\mu$, then
\[\limsup_{n\to\infty} \Phi_{G_n}'(\beta)=\frac{d}{k}\mathbb{E}\left(-h\left(\sum_{i\in \partial a} X_i\right)\right),\] 
where the expectation over a random choice of $X$. We do not give more details of this weak limit argument, we refer to \cite{sly2014counting} instead, where the  same idea was applied.

Using Lemma \ref{lemma22}, we obtain that
\[\limsup_{n\to\infty} \Phi_{G_n}'(\beta)=\frac{d}{k}\mathbb{E}\left(-h\left(\sum_{i\in \partial a} X_i\right)\right)\le \frac{d}{k} \frac{-\sum_{i=0}^k {{k}\choose{i}} h_i\exp(-\beta h_i+i\mathfrak{g}(\beta))}{\sum_{i=0}^k {{k}\choose{i}} \exp(-\beta h_i+i\mathfrak{g}(\beta))}%=( \Phi(\beta,g(\beta)))'
.\] 
Therefore, by Lemma \ref{interpol}, we have 
\[\limsup_{n\to\infty} (\Phi_{G_n}(\beta_1)-\Phi_{G_n}(0))\le \int_0^{\beta_1} \frac{d}{k} \frac{-\sum_{i=0}^k {{k}\choose{i}} h_i\exp(-\beta h_i+i\mathfrak{g}(\beta))}{\sum_{i=0}^k {{k}\choose{i}} \exp(-\beta h_i+i\mathfrak{g}(\beta))} d\beta=\Phi(\beta_1,\mathfrak{g}(\beta_1))-\Phi(0,g(0)).\]

Now it is easy to check that $\Phi_{G_n}(0)=\Phi(0,\mathfrak{g}(0))=\log 2$. Therefore,
\[\limsup_{n\to\infty} \Phi_{G_n}(\beta_1)\le \Phi(\beta_1,\mathfrak{g}(\beta_1)).\]

Since the sequence $h_i$ is concave, our graphical models fall into the category of log-supermodular graphical models. Therefore the results of Ruozzi \cite{ruozzi2012bethe} can be applied to obtain that 
\[\Phi_{G_n}(\beta_1)\ge \Phi(\beta_1,\tau)\]
for any $\tau$ in the local marginal polytope of $G$. Choosing $\tau$ by using the fixed point $\mathfrak{g}(\beta_1)$ as it was done in Subsection \ref{subsecbethe}, we obtain that
\[\Phi_{G_n}(\beta_1)\ge \Phi(\beta_1,\mathfrak{g}(\beta_1)).\]
Consequently,
\[\liminf_{n\to\infty} \Phi_{G_n}(\beta_1)\ge \Phi(\beta_1,\mathfrak{g}(\beta_1)).\]
Thus, the statement follows.     
\bibliography{references}
\bibliographystyle{plain}

\Addresses

\appendix
\section{Extremal Gibbs measures and BP fixed points: The proof of Lemma \ref{lemma5}}

Let $\nu$ be an extremal Gibbs-measure, and let $X$ be a random element of $\{0,1\}^V$ with law $\nu$. For a subset $U$ of $V$, let $\mathcal{F}_U$ be the sigma-algebra generated by $(X_i)_{i\in U}$. Given an edge $ia$, we define $U(ia)\subset V$ such that $j\in U(ia)$ if and only if the unique path from $j$ to $a$ in $\mathbb{T}$ contains~$i$. 

We say that $\nu$ is a Markov-chain if for any edge $ia$ and $x_{\partial a\backslash i}\in \{0,1\}^{\partial a\backslash i}$, we have
\[\mathbb{P}(X_{\partial a\backslash i}=x_{\partial a\backslash i}|\mathcal{F}_{U(ia)})=\mathbb{P}(X_{\partial a\backslash i}=x_{\partial a\backslash i}|\mathcal{F}_{\{i\}}).\] 

\begin{lemma}
If $\nu$ is an extremal Gibbs measure, then it is a Markov-chain.
\end{lemma}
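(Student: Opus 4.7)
My plan is to adapt Zachary's classical argument (the paper notes that his original proof transfers almost verbatim). The overall strategy has two parts: first, upgrade extremality to triviality of the tail $\sigma$-algebra; then combine tail triviality with the DLR specification and the tree structure to extract the Markov chain property.

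The first step shows that the tail $\sigma$-algebra $\mathcal{T} = \bigcap_{V_R \text{ finite}} \sigma(X_j : j \in V \setminus V_R)$ is $\nu$-trivial. This is a standard Choquet-type argument: if some $A \in \mathcal{T}$ had $\nu(A) \in (0,1)$, then the conditional measures $\nu(\cdot \mid A)$ and $\nu(\cdot \mid A^c)$ would both satisfy the DLR specification of Lemma~\ref{lemma5}'s definition (since for finite $V_R$ the specification only depends on the interior through $X_{\partial R}$, and conditioning on a tail event only affects the behavior ``at infinity''), giving a nontrivial convex decomposition of $\nu$ and contradicting its extremality.

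The second step exploits the tree. Fix the edge $(i,a)$ and set $W = V \setminus U(ia)$. For any finite $V_R \subset W$, the tree structure forces $\partial R \subset W \cup \{i\}$, since the only connection from $V_R$ to $U(ia)$ inside the bipartite graph passes through the factor node $a$ and its variable neighbor $i$. Applying the DLR specification to an exhausting sequence $V_R^n \uparrow W$ containing $\partial a \setminus \{i\}$, we obtain
\[\mathbb{P}\bigl(X_{V_R^n} = x \,\big|\, X_{\partial R^n} = y\bigr) = \frac{1}{Z^n(y)} \prod_{a' \in F_R^n} \psi_{a'}(x_{\partial a'}),\]
where the dependence on the $U(ia)$-side enters only through $x_i$. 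Taking $n \to \infty$ together with Lévy's forward martingale convergence theorem (applied to the exhaustion $U_n \uparrow U(ia)$) expresses $\mathbb{P}(X_{\partial a \setminus i} = \cdot \mid \mathcal{F}_{U(ia)})$ as a function of $X_i$ and a limiting ratio $Y$ of partition functions arising from the $W$-side boundary-at-infinity.

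This quantity $Y$ is manifestly $\mathcal{T}$-measurable, being the limit of functions of $X$ evaluated on configurations that recede to infinity on the $W$-side. Invoking tail triviality from Step~1, $Y$ is $\nu$-a.s.\ constant, so the conditional distribution $\mathbb{P}(X_{\partial a \setminus i} = \cdot \mid \mathcal{F}_{U(ia)})$ is a deterministic function of $X_i$, which is precisely the Markov chain property. The main technical obstacle is this final step: carefully defining the limit $Y$ and verifying its tail-measurability requires a double-limit exchange between the boundary-exhaustion limit $V_R^n \uparrow W$ and the martingale convergence $U_n \uparrow U(ia)$. Zachary's argument handles this by working with finite-volume Gibbs measures on the subtrees separated by $\{i\}$ and showing convergence to extremal Gibbs measures on each subtree with prescribed boundary value at $i$, from which the desired conditional independence follows by a uniqueness argument for those restricted Gibbs measures.
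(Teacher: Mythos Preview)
The paper supplies no argument at all here---its entire proof is the single sentence ``It can be proved along the lines of [Georgii, Theorem 12.6].'' Your outline is precisely that classical argument (extremality $\Rightarrow$ tail triviality, then DLR plus the tree separation at $i$ plus tail triviality forces the global Markov property), so you and the paper take the same route, with you actually providing more detail; the imprecision you yourself flag in Step~2---the two exhaustions and the undefined limit $Y$---is real but no worse than the paper's complete deferral to the literature.
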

\begin{proof}
It can be proved along the lines of \cite[Theorem 12.6]{georgii2011gibbs}.
\end{proof}
Given an edge $ia$ and $x_i\in\{0,1\}, x_{\partial a\backslash i}\in \{0,1\}^{\partial a\backslash i}$, we define
\[P_{ia}(x_i,x_{\partial a\backslash i})=\frac{\mathbb{P}(X_{\partial a} =x_{\partial a})}{\mathbb{P}(X_i=x_i)}.\]

Let us consider $V_R$ and $F_R$ us in the statement of the lemma. Take any $j\in V_R$. Orient the edges of $\mathbb{T}$ such that they are oriented away from $j$. For $a\in F$, let $i(a)$ be the unique neighbor of $a$ such that the edge $i(a)a$ is oriented towards $a$. This definition is consistent with the previous definition of $i(a)$. Then it can be proved by induction that for $x_{V_R}\in \{0,1\}^{V_R}$, we have
\begin{equation}\label{formula1}
\mathbb{P}(X_{V_R}=x_{V_R})=\mathbb{P}(X_j=x_j)\prod_{a\in F_R} P_{i(a)a}(x_i(a),x_{\partial a\backslash i(a)}).
\end{equation}   

Now set $F_R'=F_R\cup \partial R$, $V_R'=\partial F_R'$. Let $H=V_R'\backslash V_R$. 

We have
\[\mathbb{P}(X_{V_R}=x_{V_R})=\frac{\mathbb{P}(X_{V_R}=x_{V_R})}{\mathbb{P}(X_{V_R}=x_{V_R},X_H=0)}\cdot \frac{\mathbb{P}(X_{V_R}=x_{V_R},X_H=0)}{\mathbb{P}(X_{V_R}=0,X_H=0)}\cdot \mathbb{P}(X_{V_R}=0,X_H=0).\] 

Using equation \eqref{formula1}, we see that
\[\frac{\mathbb{P}(X_{V_R}=x_{V_R})}{\mathbb{P}(X_{V_R}=x_{V_R},X_H=0)}=\left(\prod_{a\in \partial R} P_{i(a)a}(x_{i(a)},0)\right)^{-1}.\]

From the fact that $\nu$ is a Gibbs-measure, we have

\[\frac{\mathbb{P}(X_{V_R}=x_{V_R},X_H=0)}{\mathbb{P}(X_{V_R}=0,X_H=0)}=\frac{\mathbb{P}(X_{V_R}=x_{V_R}|X_H=0)}{\mathbb{P}(X_{V_R}=0|X_H=0)}=\prod_{a\in F_R}\frac{\psi_a(x_{\partial a})}{\psi_a(0)}\prod_{a\in \partial R} \frac{\psi_a(x_{i(a)}0)}{\psi_a(0)},\]
where $x_{i(a)}0$ stands for the vector $y_{\partial a}\in \{0,1\}^{\partial a}$ such that $y_{i(a)}=x_{i(a)}$ and $y_j=0$ for all $j\in \partial a\backslash i(a)$.

By setting
\[Z=\frac{\prod_{a\in F_R'}\psi_a(0)}{\mathbb{P} (X_{V_R}=0,X_H=0)}\]
and \[\hat{\ell}_{a\to i}(x_i) =\frac{\psi_a(x_i0)}{P_{ia}(x_i ,0)},\]
we have
\[\mathbb{P}(X_{V_R}=x_{V_R})=\frac{1}{Z}\prod_{a\in F_R} \psi_a(x_{\partial a}) \prod_{a\in \partial R} \hat{\ell}_{a\to i(a)}(x_{i(a)}).\]

By setting $\hat{u}_{a\to i}=\log \frac{\hat{\ell}_{a\to i}(1)}{\hat{\ell}_{a\to i}(0)}$, we have
\[\mathbb{P}(X_{V_R}=x_{V_R})=\frac{1}{Z'}\prod_{a\in F_R} \psi_a(x_{\partial a}) \exp\left(\sum_{a\in \partial R} x_{i(a)} \hat{u}_{a\to i(a)}\right).\]
We can also set
\[u_{i\to a}=\sum_{b\in \partial i\backslash a} \hat{u}_{b\to i}.\]
It remains to prove that $(u_{i\to a}),(\hat{u}_{a\to i})$ is indeed a BP fixed point.

Set $R=\{a\}\cup \partial a$. We have
\begin{align*}
\hat{\ell}_{a\to i}(x_i)&=\frac{\psi_a(x_i0)}{P_{ia}(x_i ,0)}
\\&=\frac{\psi_a(x_i0)\mathbb{P}(X_i=x_i)}{\mathbb{P}(X_{\partial a}=x_i0)}
\\&=\frac{\psi_a(x_i0)\sum_{x_{\partial a\backslash i}}\mathbb{P}(X_{\partial a}=x_{\partial a})}{\mathbb{P}(X_{\partial a}=x_i0)}
\\&=\frac{\psi_a(x_i0)\sum_{x_{\partial a\backslash i}}Z^{-1} \psi_a(x_{\partial a}) \prod_{b\in \partial R} \hat{\ell}_{b\to i(b)}(x_{i(b)})}{Z^{-1}\psi_a(x_i0)\prod_{b\in \partial i\backslash a}\hat{\ell}_{b\to i}(x_i) \prod_{b\in \partial R\backslash \partial i}\hat{\ell}_{b\to i(b)}(0)} 
\\&=\sum_{x_{\partial a\backslash i}}\psi_a(x_{\partial a}) \prod_{b\in \partial R\backslash \partial i} \frac{  \hat{\ell}_{b\to i(a)}(x_{i(a)})}{\hat{\ell}_{b\to i(b)}(0)}
\\&=\sum_{x_{\partial a\backslash i}}\psi_a(x_{\partial a}) \exp\left(\sum_{b\in \partial R\backslash \partial i} x_{i(b)}\hat{u}_{b\to i(b)}\right).
\end{align*}

Therefore, 
\begin{align*}\hat{u}_{a\to i}&=\log\frac{\sum_{x_{\partial a}, x_i=1 } \psi_a(x_{\partial_a})  \exp\left(\sum_{b\in \partial R\backslash \partial i} x_{i(b)}\hat{u}_{b\to i(b)}\right)}{\sum_{x_{\partial a}, x_i=0 } \psi_a(x_{\partial_a})  \exp\left(\sum_{b\in \partial R\backslash \partial i} x_{i(b)}\hat{u}_{b\to i(b)}\right)}
\\&=\log\frac{\sum_{x_{\partial a}, x_i=1 } \psi_a(x_{\partial_a}) \exp\left(\sum_{j\in \partial a-i} x_j u_{j\to a}\right)}{\sum_{x_{\partial a}, x_i=0 } \psi_a(x_{\partial_a}) \exp\left(\sum_{j\in \partial a-i} x_j u_{j\to a}\right)}.
\end{align*}

\end{document}